\documentclass[11pt, a4paper]{article}

\usepackage[ansinew]{inputenc}
\usepackage[english]{babel}
\usepackage{amsmath, mathrsfs, amsthm, amssymb, color, stmaryrd}
\usepackage{a4wide}

\usepackage{hyperref}

\newtheorem{theorem}{Theorem}[section]
\newtheorem{corollary}[theorem]{Corollary}
\newtheorem{definition}[theorem]{Definition}
\newtheorem{lemma}[theorem]{Lemma}
\newtheorem{remark}[theorem]{Remark}
\newtheorem{proposition}[theorem]{Proposition}
\newtheorem{example}[theorem]{Example}
\newtheorem{property}[theorem]{Property}
\newtheorem*{conjecture}{Conjecture}

\def\ba#1\ea{\begin{align*}#1\end{align*}}

\def \follmer {F\"{o}llmer}
\def \ito {It\^{o} }

\newcommand{\R}{\mathbb{R}}
\newcommand{\E}{\mathbb{E}}
\newcommand{\N}{\mathbb{N}}
\newcommand{\1}{\mathbf{1}}
\newcommand{\Z}{\mathbb{Z}}

\newcommand{\bb}[1]{\mathbb{#1}}
\newcommand{\mc}[1]{\mathcal{#1}}

\newcommand{\assign}{:=}
\newcommand{\dd}{\mathrm{d}}
\newcommand{\op}[1]{\ensuremath{\operatorname{#1}}}
\newcommand{\lb}{\llparenthesis}
\newcommand{\rb}{\rrbracket}

\begin{document}

\title{Pathwise integration and change of variable formulas \\
for continuous paths with arbitrary regularity}

\author{
  Rama Cont \footnote{LPSM, CNRS-Sorbonne Universit\'e \& Department of Mathematics,  Imperial College London. }
  \and
  Nicolas Perkowski \footnote{Institut f\"ur Mathematik,  Humboldt--Universit\"at zu Berlin}.
}

\date{March 2018.}
\maketitle

\begin{abstract}
We construct a pathwise integration theory, associated with a change of variable formula, for smooth functionals of continuous paths with arbitrary regularity defined in terms of the notion of  $p$-th variation along a sequence of time partitions. 
For  paths with finite $p$-th variation along a sequence of time partitions, we derive a change of variable formula for $p$ times continuously differentiable functions and show pointwise convergence of appropriately defined compensated Riemann sums.  

Results for functions are extended to regular path-dependent  functionals using the concept of vertical derivative of a functional. We show that the pathwise integral satisfies an `isometry' formula  in terms of $p$-th order variation   and obtain  a `signal plus noise' decomposition  for regular functionals of paths with strictly increasing $p$-th variation.
For less regular  ($C^{p-1}$) functions  we obtain a Tanaka-type change of variable formula using an appropriately defined  notion of  local time. 

These results extend to multidimensional paths and yield a natural   higher-order  extension of the concept of `reduced rough path'. We show that, while our  integral coincides with a rough-path integral for a certain rough path,  its construction is canonical and does not involve the specification of any rough-path superstructure. 
\end{abstract}



\section*{Introduction}

In his seminal paper {\it Calcul d'\ito sans probabilit\'es} \cite{follmer1981}, Hans \follmer{}  provided a pathwise proof of the \ito formula, using the concept of {\it quadratic variation along a sequence of partitions}, defined as follows.
A  path $S\in C([0,T],\mathbb{R})$ is said to have finite
quadratic variation along the sequence of partitions
$\pi_n=(0=t^{n}_0<t^n_1<\cdots<t^{n}_{N(\pi_n)}=T)$  if for any $t\in [0,T]$, the sequence of measures
\[
		\mu^n \assign \sum_{[t^n_j, t^n_{j + 1}] \in \pi_n} \delta (\cdot - t_j) |S(t^n_{j + 1}) - S(t^n_j)|^2
	\]
	converges weakly to a measure $\mu$ without atoms. The  continuous increasing function $[S] \colon  [0,T]\to \mathbb{R}_+$ defined by $[S](t)=\mu([0,t])$ is then called the quadratic variation of $S$ along $\pi$. 
	Extending this definition to vector-valued paths 
 F\"ollmer \cite{follmer1981} showed that, for integrands of the form  $\nabla f( S(t))$ with $f\in C^2(\mathbb{R}^d)$, one may define a pathwise integral $\int \nabla f(S(t))dS$  as a pointwise limit of  Riemann sums  along the sequence of partitions $(\pi_n)$
 and he obtained an \ito (change of variable) formula for $f( S(t))$ in terms of this pathwise integral: for $f\in C^2(\mathbb{R}^d), t\in [0,T]$,
 \ba  &&f(S(t))= \int_0^t \langle \nabla f(S(s)), \dd S(s)\rangle + \frac{1}{2}\int_0^t \langle \nabla^2 f(S(s)), \dd[S](s)\rangle,  \nonumber\\
 {\rm where}&& \int_0^t \langle \nabla f(S(s)) , \dd  S(s) \rangle:=\mathop{\lim}_{n\to\infty}\sum_{[t^n_j, t^n_{j + 1}] \in \pi_n} \langle \nabla f(S(t)), ( S(t^n_{j + 1} \wedge t) - S(t^n_j \wedge t))\rangle .
 \nonumber\ea
 This result  has many interesting ramifications and applications in the pathwise approach to stochastic analysis, and has been  extended in different ways, to less regular functions using the notion of pathwise local time \cite{bertoin1987,davis2018,perkowski2015}, as well as to path-dependent functionals and   integrands \cite{ananova2017,cont2012,CF10B, perkowski2016}.
 
The central role played by the concept of  quadratic variation has led to the assumption that they do not extend to  less regular paths with infinite quadratic variation. Integration theory and change of variables formulas for processes with infinite quadratic variation, such as fractional Brownian motion and other fractional processes, have relied on probabilistic, rather than pathwise constructions \cite{carmona2003,coutin2007,gradinaru2003}. Furthermore, the change of variable formulae obtained using these methods are valid for a restricted range of Hurst exponents (see \cite{nualart2006} for an overview).

In this work, we show that  \follmer's pathwise \ito calculus may be extended to paths with arbitrary regularity, in a strictly pathwise setting, using the concept of  $p$-th variation along a sequence of time partitions.
For  paths with finite $p$-th variation along a sequence of time partitions, we derive a change of variable formula for $p$ times continuously differentiable functions and show pointwise convergence of appropriately defined compensated Riemann sums. 
This result may be seen as the natural extension of the results of \follmer~\cite{follmer1981}  to paths of lower regularity.
Our results apply in particular to paths of fractional Brownian motions with arbitrary Hurst exponent, and yield pathwise proofs for results previously derived using probabilistic methods, without any restrictions on the Hurst exponent.

Using the concept of the vertical derivative of a functional \cite{CF10B}, we extend these results to regular path-dependent  functionals of such paths.
We obtain an `isometry' formula in terms of $p$-th order variations for the pathwise integral and a `signal plus noise' decomposition for regular functionals of paths with strictly increasing $p$-th variation, extending the results of \cite{ananova2017} obtained for the case  $p=2$ to arbitrary even integers $p\geq 2$.

The extension to less regular (i.e. not $p$ times differentiable) functions is more delicate and requires defining an appropriate higher-order analogue of semimartingale local time, which we introduce through an appropriate spatial localization of the $p$-th order variation. Using this higher-order concept of local time, we obtain a Tanaka-type change of variable formula for less regular (i.e. $p-1$ times differentiable) functions. We conjecture that these results apply in particular to paths of fractional Brownian motion and other fractional processes.

Finally, we consider extensions of these results to multidimensional paths and link them with rough path theory; the corresponding concepts yield a natural  higher order extension to the concept of `reduced rough path' introduced by Friz and Hairer~\cite[Chapter~5]{FrizHairer}.

\paragraph{Outline}
Section \ref{sec.pvariation} introduces the notion of $p$-th variation along a sequence of partitions and derives a change of variable formula for $p$ times continuously differentiable functions of  paths with finite $p$-th variation (Theorem \ref{thm:follmer-ito}).
An extension of these results to path-dependent functionals is discussed in Section \ref{sec.pathdependent}: Theorem \ref{thm.functional} gives a functional change of variable formula for regular functionals of paths with finite $p$-th variation.

Section \ref{sec.isometry} studies the corresponding pathwise integral in more detail. We first show (Theorem \ref{thm.isometry}) that the integral exhibits an `isometry' property in terms of the $p$-th order variation and use this property to obtain a unique `signal plus noise' decomposition where the components are discriminated in terms of their $p$-th order variation (Theorem \ref{thm.decomposition}).

The extension of these concepts to multidimensional paths and the relation to the concept of `reduced rough paths' are discussed in Section \ref{sec.multidimensional}.

\paragraph{Acknowledgement}
This work was completed while N.P. was visiting the University of Technology Sydney as Bruti-Liberati Visiting Fellow. N.P. is grateful for the kind hospitality at UTS and for the generous financial support through the Bruti-Liberati Scholarship. N.P. also gratefully acknowledges financial support by the DFG via Research Unit FOR 2402.

\section{Pathwise calculus for paths with finite $p$-th variation}\label{sec.pvariation}

\subsection{$p$-th variation along a sequence of partitions}

We introduce, in the spirit of \follmer ~\cite{follmer1981}, the concept of $p$-th variation along a sequence of partitions $\pi_n=\{t_0^n, \dots, t^n_{N(\pi_n)}\}$ with $t_0^n=0<...< t^n_k<...< t^n_{N(\pi_n)}=T$. Define the \emph{oscillation} of $S \in C([0,T],\R)$ along $\pi_n$ as
\[
	\op{osc}(S,\pi_n) := \max_{[t_j,t_{j+1}] \in \pi_n} \max_{r,s \in [t_j,t_{j+1}]} |S(s) - S(r)|.
\]
Here and in the following we write $[t_j,t_{j+1}] \in \pi_n$ to indicate that $t_j$ and $t_{j+1}$ are both in $\pi_n$ and are immediate successors (i.e. $t_j < t_{j+1}$ and $ \pi_n \cap(t_j , t_{j+1}) = \emptyset$).

\begin{definition}[$p$-th variation along a sequence of partitions]\label{def:p-var} Let $p>0$. A  continuous path $S \in C([0,T],\R)$ is said to have a $p$-th variation along a sequence of
partitions $\pi=(\pi_n)_{n\geq 1}$ if $\op{osc}(S,\pi_n)\to 0$ and the sequence of measures
	\[
		\mu^n \assign \sum_{[t_j, t_{j + 1}] \in \pi_n} \delta (\cdot - t_j) |S(t_{j + 1}) - S(t_j)|^p
	\]
	converges weakly to a measure $\mu$ without atoms. In that case we write $S \in V_p(\pi)$ and $[S]^p(t) := \mu([0,t])$ for $t \in [0,T]$, and we call $[S]^p$ the \emph{$p$-th variation} of $S$.
%
\end{definition}

\begin{remark}\label{rem.pvariation}
\begin{enumerate}
	\item Functions in $V_p(\pi)$ do not necessarily have finite $p$-variation in the usual sense. Recall that the $p$-variation of a function $f \in C([0,T],\R)$ is defined as \cite{dudley2011}
		\[
			\|f \|_{\op{p-var}} \assign \Big(\sup_{\pi\in \Pi([0,T])} \sum_{[t_j, t_{j + 1}] \in \pi} |f(t_{j + 1}) - f(t_j)|^p\Big)^{1/p},
		\]
		where the supremum is taken over the set $\Pi([0,T])$ of all partitions $\pi$ of $[0,T]$. A typical example is the Brownian motion $B$, which has quadratic variation $[B]^2(t) = t$ along any refining sequence of partitions almost surely while at the same time having infinite 2-variation almost surely \cite{dudley2011,taylor1972}: $$\mathbb{P}\left( \|B\|_{\op{2-var}} = \infty\right) =1.$$  
	\item If $S \in V_p(\pi)$ and $q > p$, then $S \in V_q(\pi_n)$ with $[S]^q \equiv 0$.
\end{enumerate}
\end{remark}
The following lemma gives a simple characterization of this property:
\begin{lemma} Let $S \in C([0,T],\R)$.
 $S \in V_p(\pi)$ if and only if there exists a continuous function $[S]^p$ such that
	\begin{equation}
	\forall t\in[0,T],\qquad \sum_{\substack{[t_j, t_{j + 1}] \in \pi_n: \\ t_j \le t}} |S(t_{j + 1}) - S(t_j)|^p\mathop{\longrightarrow}^{n\to\infty} [S]^p(t).\label{eq.pointwisecv}\end{equation}
 If this property holds, then the convergence in \eqref{eq.pointwisecv} is uniform.
\end{lemma}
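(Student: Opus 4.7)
The plan is to identify the partial sums in \eqref{eq.pointwisecv} with the distribution function $F^n(t) \assign \mu^n([0,t])$ of the measure $\mu^n$, and to invoke the classical equivalence between weak convergence of finite positive Borel measures on $[0,T]$ and pointwise convergence of their distribution functions at the continuity points of the limit.

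For the direct implication, I would assume $S \in V_p(\pi)$ and set $[S]^p(t) \assign \mu([0,t])$. Because $\mu$ is atomless, $[S]^p$ is continuous and $\mu(\partial[0,t]) = 0$ for every $t \in [0,T]$, so the Portmanteau theorem gives $F^n(t) \to [S]^p(t)$ pointwise, which is precisely \eqref{eq.pointwisecv}.

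For the converse, assume \eqref{eq.pointwisecv} with $[S]^p$ continuous. As a pointwise limit of non-decreasing functions the limit is itself non-decreasing, so let $\mu$ be the Lebesgue--Stieltjes measure associated with $[S]^p$; continuity of $[S]^p$ is equivalent to $\mu$ being atomless. A Helly--Bray argument then turns pointwise convergence of $F^n$ at every continuity point of the limit (here, every $t \in [0,T]$) into weak convergence $\mu^n \to \mu$. The oscillation condition $\op{osc}(S,\pi_n) \to 0$ required by the definition of $V_p(\pi)$ must be argued separately, by contradiction: if it failed along some subsequence, one could extract partition intervals $[t_{j_n},t_{j_n+1}]$ on which $S$ oscillates by more than some $\epsilon > 0$, pass to a further subsequence so that these intervals converge to some $[a,b]$, and combine the continuity of $S$ and the pointwise convergence of $F^n$ at interior points of $[a,b]$ with the continuity of $[S]^p$ to derive a contradiction.

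The uniform convergence in \eqref{eq.pointwisecv} is then a standard Dini-type argument for monotone functions. Given $\epsilon > 0$, use uniform continuity of $[S]^p$ to pick a finite grid $0 = s_0 < s_1 < \cdots < s_M = T$ with $[S]^p(s_{i+1}) - [S]^p(s_i) < \epsilon$; once $n$ is large enough that $|F^n(s_i) - [S]^p(s_i)| < \epsilon$ for every $i$, the monotonicity of $F^n$ sandwiches $F^n(t)$ between $F^n(s_i)$ and $F^n(s_{i+1})$ for each $t \in [s_i,s_{i+1}]$, yielding $|F^n(t) - [S]^p(t)| \le 3\epsilon$ uniformly in $t$. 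The step I expect to be the main obstacle is the oscillation argument in the converse, since pointwise convergence of the distribution functions alone leaves room for $S$ to oscillate inside partition intervals whose contribution to the partition sum is small.
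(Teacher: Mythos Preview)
Your main line of argument---the Portmanteau/Helly--Bray equivalence between weak convergence of $\mu^n$ and pointwise convergence of the distribution functions at continuity points of the limit, together with the Dini-type upgrade to uniform convergence---is exactly what the paper does; its proof is a one-sentence appeal to precisely this classical fact.

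Your instinct about the oscillation condition is correct, and the paper simply does not address it. In fact it \emph{cannot} be deduced from \eqref{eq.pointwisecv} alone, so your proposed contradiction argument cannot be completed. Here is a counterexample: take any non-constant $S\in C([0,T],\R)$ and let every $\pi_n$ be the trivial partition $\{0,T\}$. Then for each $t\in[0,T]$ the sum in \eqref{eq.pointwisecv} equals the constant $|S(T)-S(0)|^p$, so the pointwise limit exists and is trivially continuous, yet $\op{osc}(S,\pi_n)=\sup_{r,s}|S(r)-S(s)|>0$ for all $n$. In your sketch this is exactly the case $a<b$: the limiting interval $[a,b]$ has nonempty interior, $[S]^p$ is constant there, and nothing in the hypotheses prevents $S$ from oscillating on $[a,b]$. (Your case $a=b$ does work, by uniform continuity of $S$, but the other case does not.)

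The upshot is that the lemma should be read with $\op{osc}(S,\pi_n)\to 0$ as a standing assumption on the partition sequence (it is automatic once the mesh $|\pi_n|\to 0$, which the paper assumes in its subsequent theorems), and the ``if and only if'' concerns only the weak convergence of $\mu^n$ to an atomless limit. With that reading your argument is complete and coincides with the paper's.
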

 Indeed, the weak convergence of measures on $[0,T]$ is equivalent to the pointwise convergence of their cumulative distribution functions at all continuity points of the limiting cumulative distribution function, and if the limiting cumulative distribution function is continuous, the convergence is  uniform.
 
\begin{example}\label{ex:fBm}
If $B$ is a fractional Brownian motion with Hurst index $H \in (0,1)$ and $\pi_n = \{kT/n: k \in \N_0\} \cap[0,T]$, then $B \in V_{1/H}(\pi)$ and $[B]^{1/H}(t) = t \E[|B_1|^{1/H}]$, see~\cite{pratelli2006,Rogers1997}.
\end{example}

\subsection{Pathwise integral and change of variable formula}
A key observation of \follmer~\cite{follmer1981} was that, for $p=2$, Definition~\ref{def:p-var} is sufficient to obtain  a pathwise It\^o formula for ($C^2$) functions of $S \in V_2(\pi_n)$. We will show that in fact \follmer's argument may be applied for any even integer $p$:

\begin{theorem}[Change of variable formula for paths with finite $p$-th variation]\label{thm:follmer-ito}
	Let $p \in \N$ be even, let $(\pi_n)$ be a given sequence of partitions, and let $S \in V_p(\pi)$. Then for every $f \in C^p(\R,\R)$ the pathwise change of variable formula
	\[
		f(S(t)) - f(S(0)) = \int_0^t f'(S(s))\dd S(s) + \frac{1}{p!} \int_0^t f^{(p)}(S(s)) \dd [S]^p(s),
	\]
	holds, where the integral 
	\[
		\int_0^t f'(S(s))\dd S(s) \assign \lim_{n \rightarrow \infty} \sum_{[t_j, t_{j + 1}] \in \pi_n} \sum_{k=1}^{p-1} \frac{f^{(k)} (S(t_j))}{k!}
   (S(t_{j + 1} \wedge t) - S(t_j \wedge t))^k
	\]
	is defined as a (pointwise) limit of compensated Riemann sums.
\end{theorem}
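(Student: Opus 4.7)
The plan is to follow F\"ollmer's classical strategy from~\cite{follmer1981}, replacing the second-order Taylor expansion by one of order $p$. Starting from the telescoping identity
\[ f(S(t)) - f(S(0)) = \sum_{[t_j, t_{j + 1}] \in \pi_n} \bigl[f(S(t_{j+1}\wedge t)) - f(S(t_j \wedge t))\bigr], \]
I apply Taylor's theorem of order $p$ to each increment. Restricting to partition cells with $t_j \le t$ (the others contribute $0$), I write
\[ f(S(t_{j+1}\wedge t)) - f(S(t_j)) = \sum_{k=1}^{p-1} \frac{f^{(k)}(S(t_j))}{k!}(\Delta_j^t S)^k + \frac{1}{p!}f^{(p)}(S(t_j))(\Delta_j^t S)^p + R_j^n, \]
with $\Delta_j^t S \assign S(t_{j+1}\wedge t) - S(t_j)$ and a remainder $R_j^n = \frac{1}{p!}\bigl[f^{(p)}(\eta_j^n) - f^{(p)}(S(t_j))\bigr](\Delta_j^t S)^p$ for some $\eta_j^n$ on the segment between $S(t_j)$ and $S(t_{j+1}\wedge t)$.

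Summing over all cells produces three parts whose total equals $f(S(t)) - f(S(0))$. The first part, over $k = 1, \dots, p-1$, is \emph{exactly} the compensated Riemann sum defining $\int_0^t f'(S(s))\, \dd S(s)$, so convergence of the remaining two parts will simultaneously establish existence of the integral and the change of variable formula. For the dominant term involving $(\Delta_j^t S)^p$, I crucially use the hypothesis that $p$ is even, so that $(\Delta_j^t S)^p = |\Delta_j^t S|^p$ coincides with the mass assigned to $t_j$ by $\mu^n$. Hence, up to a single boundary cell handled below, this sum equals $\frac{1}{p!}\int_{[0,t]} g_n \,\dd\mu^n$, where $g_n(s) \assign f^{(p)}(S(t_j))$ for $s \in [t_j, t_{j+1})$. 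Since $f^{(p)} \circ S$ is uniformly continuous on $[0,T]$ and $\op{osc}(S,\pi_n) \to 0$, the step functions $g_n$ converge uniformly to $f^{(p)}\circ S$; combined with the weak convergence $\mu^n \to \mu$ to an atomless measure (so that $\mu(\{t\}) = 0$), a standard portmanteau argument yields the limit $\frac{1}{p!}\int_0^t f^{(p)}(S(s))\, \dd [S]^p(s)$.

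For the remainder, I use $|R_j^n| \le \frac{1}{p!}\omega_n |\Delta_j^t S|^p$, where $\omega_n$ is the modulus of continuity of $f^{(p)}$ on the compact set $S([0,T])$ evaluated at scale $\op{osc}(S,\pi_n)$. Uniform continuity of $f^{(p)}$ together with $\op{osc}(S,\pi_n)\to 0$ gives $\omega_n \to 0$, while $\sum_j |\Delta_j^t S|^p$ stays bounded by $[S]^p(T) + o(1)$; hence $\sum_j R_j^n \to 0$. Rearranging the telescoping identity then delivers both existence of the pointwise limit of the compensated Riemann sum and the claimed identity.

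The main obstacle is technical rather than conceptual and concerns the at most one incomplete cell with $t_j < t < t_{j+1}$, whose $p$-th power contribution is not literally recorded by $\mu^n\bigl([0,t]\bigr)$; this contribution is nonetheless controlled by $|S(t) - S(t_j)|^p \le \op{osc}(S,\pi_n)^p \to 0$ and so is negligible. The parity assumption on $p$ is essential throughout: were $p$ odd, $(\Delta_j^t S)^p$ would not coincide with $|\Delta_j^t S|^p$, the identification with the positive measure $\mu^n$ would break down, and passing to the limit by weak convergence would require genuine cancellation arguments absent from the present framework.
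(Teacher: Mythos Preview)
Your argument is correct and follows essentially the same route as the paper: a telescoping sum, a Taylor expansion of order $p$, identification of the $p$-th order term with $\int g\,\dd\mu^n$ via weak convergence, and control of the remainder through the uniform continuity of $f^{(p)}$ on the compact range of $S$. The only cosmetic differences are that the paper uses the integral form of the Taylor remainder rather than the Lagrange form, and phrases the uniform-continuity reduction as ``assume without loss of generality that $f$ is compactly supported''.
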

\begin{proof}
	Applying a Taylor expansion at order $p$ to the increments of $f(S)$ along the partition,  we obtain
	\begin{align}\label{eq:ito-pr1}
  		f (S(t)) - f (S(0)) & = \sum_{[t_j, t_{j + 1}] \in \pi_n} (f (S(t_{j + 1}\wedge t)) - f (S(t_j \wedge t)))\\ \nonumber
  		& = \sum_{[t_j, t_{j + 1}] \in \pi_n} \sum_{k=1}^p \frac{f^{(k)} (S(t_j))}{k!} (S(t_{j + 1}\wedge t) - S(t_j \wedge t))^k\\ \nonumber
  		&\quad + \sum_{[t_j, t_{j + 1}] \in \pi_n} \int_0^1 \dd \lambda \frac{(1 - \lambda)^{p - 1}}{(p - 1) !} (S(t_{j + 1} \wedge t) - S(t_j \wedge t))^p \\ \nonumber
  &\hspace{80pt}\times \big(f^{(p)} (S(t_j) + \lambda (S(t_{j + 1} \wedge t) - S(t_j \wedge t))) - f^{(p)} (S(t_j))\big) .
	\end{align}
	Since the image of $(S(t))_{t \in [0,T]}$ is compact, we may assume without loss of generality that $f$ is compactly supported; then the remainder on the right hand side is bounded by
	\ba
		&\Big|\sum_{[t_j, t_{j + 1}] \in \pi_n} \int_0^1 \dd \lambda \frac{(1 - \lambda)^{p - 1}}{(p - 1) !} (S(t_{j + 1} \wedge t) - S(t_j \wedge t))^p \\
		&\hspace{100pt} \times \big(f^{(p)} (S(t_j) + \lambda (S(t_{j + 1} \wedge t) - S(t_j \wedge t))) - f^{(p)}(S(t_j))\big) \Big|\\
		&\hspace{50pt} \le C(f, S, \pi_n, p) \mu_n([0,t]) 
	\ea
	with a constant $C(f, S, \pi_n, p) > 0$ that converges to zero for $n \rightarrow \infty$, and therefore the remainder vanishes for $n \to \infty$.
	Since $S \in V_p(\pi)$ we know that
	\[
		\lim_{n\to \infty} \sum_{[t_j, t_{j + 1}] \in \pi_n} \frac{f^{(p)} (S(t_j))}{p!} (S(t_{j + 1}\wedge t) - S(t_j \wedge t))^p = \frac{1}{p!} \int_0^t f^{(p)}(S(s)) \dd [S]^p(s),
	\]
	and therefore we obtain from~\eqref{eq:ito-pr1}
	\ba
		&\lim_{n \to \infty} \sum_{[t_j, t_{j + 1}] \in \pi_n} \sum_{k=1}^{p-1} \frac{f^{(k)} (S(t_j))}{k!} (S(t_{j + 1} \wedge t) - S(t_j \wedge t))^k \\
		&\hspace{50pt} = f(S(t)) - f(S(0)) - \frac{1}{p!} \int_0^t f^{(p)}(S(s)) \dd [S]^p(s),
	\ea
	and we simply define $\int_0^t f'(S(s)) \dd S(s)$ as the limit on the left hand side.
\end{proof}

\begin{remark}[Relation with Young integration and rough-path integration]{\em The expression
\[
		\sum_{[t_j, t_{j + 1}] \in \pi_n} \sum_{k=1}^{p-1} \frac{f^{(k)} (S(t_j))}{k!}
   (S(t_{j} \wedge t) - S(t_j \wedge t))^k
	\]
	is a `compensated Riemann sum'. 
	Note however that, given the assumptions on $S$, the pathwise integral appearing in the formula cannot be defined as a Young integral, {\it even} after substracting the compensating terms. This relates to the observation in Remark \ref{rem.pvariation} that p-variation can be infinite for $S\in V_p(\pi)$.
	
	When $p=2$ it  reduces to an ordinary (left) Riemann sum. For $p>2$ such  compensated Riemann sums appear in the construction of `rough path integrals' ~\cite{FrizHairer,gubinelli2004}. Let $X \in C^\alpha([0,T],\R)$ be $\alpha$-H\"older continuous for some $\alpha \in (0,1)$, and write $q = \lfloor \alpha^{-1} \rfloor$. We can enhance $X$ uniquely into a (weakly) geometric rough path $(\bb X^1_{s,t}, \bb X^2_{s,t}, \dots, \bb X^q_{s,t})_{0 \le s \le t \le T}$, where $\bb X^k_{s,t} \assign  (X(t) - X(s))^k/k!$. Moreover, for $g \in C^{q+1}(\R,\R)$ the function $g'(X)$ is controlled by $X$ with Gubinelli derivatives
	\begin{align*}
		g'(X(t)) - g'(X(s)) & = \sum_{k=1}^{q-1} \frac{g^{(k+1)}(X(s))}{k!} (X(t) - X(s))^k + O(|t-s|^{q \alpha}) \\
		& = \sum_{k=1}^{q-1} g^{(k+1)}(X(s)) \bb X^k_{s,t} + O(|t-s|^{q \alpha}),
	\end{align*}
	and therefore the controlled rough path integral $\int_0^t g'(X(s)) \dd X(s)$ is given by
	\[
		\lim_{|\pi|\to 0} \sum_{[t_j, t_{j+1}] \in \pi} \sum_{k=1}^q g^{(k)}(X(s)) \bb X^k_{s,t} = \lim_{|\pi|\to 0} \sum_{[t_j, t_{j+1}] \in \pi} \sum_{k=1}^q g^{(k)}(X(s)) \frac{(X(t) - X(s))^k}{k!},
	\]
	where $|\pi|$ denotes the mesh size of the partition $\pi$, and which is exactly the type of compensated Riemann sum that we used to define our integral. The link between our approach and rough path integration is explained in more detail in Section~\ref{sec.multidimensional-rp} below.}
\end{remark}

\begin{remark}{\em
	In principle we could apply similar arguments for odd integers $p$ if instead of $S \in V_p(\pi)$ we assumed that $\sum_{[t_j, t_{j + 1}] \in \pi_n} \delta (\cdot - t_j) (S(t_{j + 1}) - S(t_j))^p$ converges to a signed measure. However, for odd $p$ we typically expect the limit to be zero, see the Appendix for a prototypical example. So to slightly simplify the presentation, we restrict our attention to even $p$.}
\end{remark}


\begin{remark}{\em
	A  notion similar to our definition of $p$-th variation was  introduced by Errami and Russo~\cite{errami2003}, in the (probabilistic and not pathwise) context of stochastic calculus via regularization~\cite{russo2007}. For $p=3$, Errami and Russo prove an It\^o type formula that is similar to the one in  Theorem~\ref{thm:follmer-ito}. However, since they use a definition of the integral $\int_0^t f'(S(s)) \dd S(s)$ that does not take the higher order compensation terms into account, their approach is limited to $p=3$. Gradinaru, Russo, and Vallois~\cite{gradinaru2003}  extended this approach to $p=4$ for functions of a fractional Brownian motion with Hurst index $H\ge 1/4$, a result which relies heavily on the  Gaussian properties of fractional Brownian motion.
	
	The key  ingredient of our approach is to define the integral using compensated Riemann sums which, compared with previous work, drastically simplifies the derivation of the change of variable formula for arbitrary (even) $p$ in a strictly pathwise setting without any use of probabilistic notions of convergence.}
\end{remark}

\subsection{Extension to path-dependent functionals}\label{sec.pathdependent}

  An important generalization of \follmer's pathwise \ito formula is to the case of path-dependent functionals  \cite{CF10B} of paths  $S \in V_2(\pi)$ using Dupire's notion of functional derivative \cite{D09}; see~\cite{cont2012} for an overview. We extend here the functional change of variable formula of Cont and Fourni\'e \cite{CF10B} to functionals of paths  $S \in V_p(\pi)$, where $p$ is any even integer.

Let $D([0,T],\R)$ be the space of c\`{a}dl\`{a}g paths from $[0,T]$ to $\R$ and write
\[
	 \omega_t(s) = \omega(s \wedge t),
\]
for the path $\omega$ stopped at time $t$. Let
\[
	\Lambda_T \assign \{ (t, \omega_t): (t,\omega) \in [0,T] \times D([0,T],\R)\}
\]
be the space of stopped paths. This is a complete metric space equipped with
\[
	d_\infty((t,\omega), (t',\omega')) := \sup_{s \in [0,T]} |\omega(s \wedge t) - \omega'(s \wedge t')| + |t-t'| = \|\omega_t - \omega_{t'}\|_\infty + |t - t'|.
\]
We will also need to stop paths ``right before'' a given time, and set for $t>0$
\[
	\omega_{t-}(s) \assign \begin{cases} \omega(s), & s < t, \\ \mathop{\lim}_{r \uparrow t} \omega(r), & s \ge t, \end{cases}
\]
while $\omega_{0-} \assign \omega_0$.
We first recall some concepts from the non-anticipative functional calculus \cite{CF10B,cont2012}.
\begin{definition}
	A \emph{non-anticipative} functional is a map $F\colon \Lambda_T \to \R$. Let $F$ be a non-anticipative functional.
	\begin{enumerate}
		\item[i.] We write $F \in \bb C_l^{0,0}(\Lambda_T)$ if for all $t \in [0,T]$ the map $F(t,\cdot) \colon D([0,T],\R) \to \R$ is continuous and if for all $(t,\omega) \in \Lambda_T$ and all $\varepsilon > 0$ there exists $\delta > 0$ such that for all $(t',\omega') \in \Lambda_T$ with $t'<t$ and $d_\infty((t,\omega),(t',\omega')) < \delta$ we have $|F(t,\omega) - F(t',\omega')| < \varepsilon$.
		\item[ii.] We write $F \in \bb B(\Lambda_T)$ if for every $t_0 \in [0,T)$ and every $K>0$ there exists $C_{K,t_0}>0$ such that for all $t \in [0,t_0]$ and all $\omega \in D([0,T],\R)$ with $\sup_{s \in [0,t]} |\omega(s)| \le K$ we have $|F(t,\omega)| \le C_{K,t_0}$.
		\item[iii.] $F$ is \emph{horizontally differentiable} at $(t,\omega) \in \Lambda_T$ if its \emph{horizontal derivative}
			\[
				\mc D F(t,\omega) \assign \lim_{h\downarrow 0} \frac{F(t+h,\omega_t) - F(t,\omega_t)}{h}
			\]
			exists. If it exists for all $(t,\omega) \in \Lambda_T$, then $\mc D F$ is a non-anticipative functional.
		\item[iv.] $F$ is \emph{vertically differentiable} at $(t,\omega) \in \Lambda_T$ if its \emph{vertical derivative}
			\[
				\nabla_\omega F(t,\omega) \assign \lim_{h \downarrow 0} \frac{F(t,\omega_t + h \1_{[t,T]}) - F(t,\omega_t)}{h}
			\]
			exists. If it exists for all $(t,\omega) \in \Lambda_T$, then $\nabla_\omega F$ is a non-anticipative functional. In particular, we define recursively $\nabla_\omega^{k+1} F \assign \nabla_\omega \nabla_\omega^k F$ whenever this is well defined.
		\item[v.] For $p \in \N_0$ we say that $F \in \bb C^{1,p}_b(\Lambda_T)$ if $F$ is horizontally differentiable and $p$ times vertically differentiable in every $(t,\omega) \in \Lambda_T$, and if $F, \mc D F, \nabla_\omega^k F \in \bb C^{0,0}_l(\Lambda_T) \cap \bb B(\Lambda_T)$ for $k=1,\dots, p$.
	\end{enumerate}
\end{definition}

Define the piecewise-constant approximation $S^n$ to $S$ along the partition $\pi_n$:
\begin{equation}\label{eq:pw-cst-approx}
	S^n (t) = \sum_{[t_j, t_{j + 1}] \in \pi_n} S (t_{j + 1})
   \1_{[t_j, t_{j + 1})} (t) + S (T) \1_{\{ T \}} (t) .
\end{equation}
Then $\lim_{n \rightarrow \infty} \| S^n - S \|_{\infty} = 0$ whenever $\mathrm{osc}(S,\pi_n) \to 0$. 

\begin{theorem}[Functional change of variable formula for paths with finite $p$-th variation]\label{thm.functional}
  Let $p$ be an even integer, let $F \in \mathbb{C}^{1, p}_b(\Lambda_T)$, and let $S \in V_p(\pi)$ for a sequence of partitions $(\pi_n)$ with vanishing mesh size $|\pi_n|\to 0$. Then the functional change of variable formula
  \[ F (t, S_t) = F (0, S_0) + \int_0^t \mathcal{D}F (s, S_s) \dd s +
     \int_0^t <\nabla F (s, S_s), \dd S (s)> + \frac{1}{p!} \int_0^t
     \nabla_{\omega}^p F (s, S_s) \dd [S]^p (s) \]
  holds, where
  \[ \int_0^t <\nabla F (s, S_s) ,\dd S (s)> \assign \lim_{n \rightarrow
     \infty} \sum_{[t_j, t_{j + 1}] \in \pi_n} \sum_{k=1}^{p-1}
     \frac{1}{k!} \nabla_{\omega}^k F (t_j, S^n_{t_j -}) (S (t_{j + 1} \wedge
     t) - S (t_j \wedge t))^k, \]
     with the piecewise constant approximation $S^n$ as defined in~\eqref{eq:pw-cst-approx}.
\end{theorem}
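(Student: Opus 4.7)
The plan is to adapt the pathwise argument of Theorem \ref{thm:follmer-ito} to the functional setting by combining the horizontal--vertical decomposition of Cont and Fourni\'e \cite{CF10B} with a Taylor expansion in the vertical direction carried out to order $p$. Let $\pi_n = \{0 = t_0 < \cdots < t_{N(\pi_n)} = T\}$ and telescope:
\[
  F(t, S^n_{t-}) - F(0, S^n_{0-}) = \sum_{[t_j, t_{j+1}] \in \pi_n} \bigl[F(t_{j+1}\wedge t, S^n_{(t_{j+1}\wedge t)-}) - F(t_j \wedge t, S^n_{(t_j \wedge t)-})\bigr].
\]
The left-hand side tends to $F(t,S_t) - F(0,S_0)$ because $\|S^n_{t-} - S_t\|_\infty \le \op{osc}(S,\pi_n) \to 0$ and $F \in \bb C^{0,0}_l(\Lambda_T)$. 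The key identity, read directly from \eqref{eq:pw-cst-approx}, is $S^n_{t_{j+1}-} = S^n_{t_j}$: both paths agree with $S^n$ on $[0,t_{j+1})$ and both equal $S(t_{j+1})$ on $[t_{j+1}, T]$. This lets me split each increment into a purely horizontal step (freezing the path) and a purely vertical jump of size $h_{j+1} \assign S(t_{j+1}) - S(t_j)$.

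For the horizontal step, $S^n_{t_j}$ is constant on $[t_j, T]$, so by the standard lemma of the non-anticipative functional calculus the map $s \mapsto F(s, S^n_{t_j})$ is absolutely continuous on $[t_j, t_{j+1}]$ with derivative $\mc D F(s, S^n_{t_j})$, yielding
\[
  F(t_{j+1}, S^n_{t_j}) - F(t_j, S^n_{t_j}) = \int_{t_j}^{t_{j+1}} \mc D F(s, S^n_{t_j}) \dd s.
\]
Summed over $j$, joint $d_\infty$-continuity and local boundedness of $\mc D F$ together with $\|S^n - S\|_\infty \to 0$ give $\int_0^t \mc D F(s, S_s)\dd s$ in the limit by dominated convergence. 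For the vertical jump $F(t_j, S^n_{t_j}) - F(t_j, S^n_{t_j-})$, I Taylor-expand the scalar function $h \mapsto F(t_j, S^n_{t_j-} + h\1_{[t_j,T]})$ about $h=0$ to order $p$, using that its $k$-th derivative at $h$ is $\nabla_\omega^k F(t_j, S^n_{t_j-} + h\1_{[t_j,T]})$; evaluation at $h = h_{j+1}$ gives
\[
  F(t_j, S^n_{t_j}) - F(t_j, S^n_{t_j-}) = \sum_{k=1}^p \frac{h_{j+1}^k}{k!} \nabla_\omega^k F(t_j, S^n_{t_j-}) + R_{j,n},
\]
with integral-form remainder controlled by $|h_{j+1}|^p \sup_{\lambda \in [0,1]}\bigl|\nabla_\omega^p F(t_j, S^n_{t_j-} + \lambda h_{j+1}\1_{[t_j,T]}) - \nabla_\omega^p F(t_j, S^n_{t_j-})\bigr|$.

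Summing over $j$ separates the contributions. The $k = 1, \ldots, p-1$ terms form, by construction, the compensated Riemann sum whose limit is declared to be $\int_0^t \langle \nabla F(s, S_s), \dd S(s)\rangle$. The $k = p$ term, namely $\frac{1}{p!}\sum_j \nabla_\omega^p F(t_j, S^n_{t_j-}) h_{j+1}^p$, converges to $\frac{1}{p!}\int_0^t \nabla_\omega^p F(s, S_s) \dd [S]^p(s)$ by a Riemann--Stieltjes argument exploiting the uniform convergence of the cumulative $p$-th variation together with continuity of $\nabla_\omega^p F$, exactly as in the proof of Theorem \ref{thm:follmer-ito}.

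The main technical obstacle is showing $\sum_j R_{j,n} \to 0$. Since $\{(s, S_s) : s \in [0,T]\}$ is $d_\infty$-compact in $\Lambda_T$, and $S^n_{t_j-} + \lambda h_{j+1}\1_{[t_j,T]}$ differs from some $S_s$ by $\op{osc}(S,\pi_n)$ in $d_\infty$ uniformly in $j$ and $\lambda$, a thickening of this compact set is still relatively compact, so $\nabla_\omega^p F$ is uniformly continuous there with some modulus $\omega_F$. Combined with $\sup_j |h_{j+1}| \le \op{osc}(S, \pi_n) \to 0$ this gives
\[
  \sum_j |R_{j,n}| \le \omega_F\bigl(\op{osc}(S,\pi_n)\bigr) \sum_j |h_{j+1}|^p,
\]
where the prefactor tends to $0$ and the sum is uniformly bounded (converging to $[S]^p(T)$). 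The boundary term at $t \notin \pi_n$ is absorbed into the same $\op{osc}$ estimate. Combining the four limits yields the claimed formula.
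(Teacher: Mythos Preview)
Your proof is correct and follows essentially the same route as the paper's: the horizontal--vertical decomposition along the piecewise-constant approximation $S^n$, the identity $S^n_{t_{j+1}-}=S^n_{t_j}$, the order-$p$ Taylor expansion in the vertical direction, and the remainder estimate via the oscillation of $S$. The only cosmetic differences are that you telescope $F(t,S^n_{t-})-F(0,S^n_{0-})$ directly (the paper telescopes $F(t,S^n_t)-F(0,S^n_0)$ and treats $F(t,S^n_t)-F(t,S^n_{t-})$ as a separate $o(1)$ term), and that for the $k=p$ term the paper invokes \cite[Lemma~5.3.7]{cont2012} whereas you appeal to a Riemann--Stieltjes argument; both amount to the same thing. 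One caution: your phrase ``$\nabla_\omega^p F$ is uniformly continuous there'' is slightly stronger than what $\bb C^{0,0}_l$ gives, since that class only guarantees left-continuity in $t$ together with continuity in $\omega$ at fixed $t$; for the remainder you actually only need the latter (the time argument $t_j$ is frozen), and uniformity in $j$ is then obtained exactly as in the $p=2$ case treated in \cite{CF10B,cont2012}, which is also what the paper's ``as in Theorem~\ref{thm:follmer-ito}'' implicitly relies on.
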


\begin{proof}
	Since the right hand side is a telescoping sum, we have
	\ba
  		F (t, S^n_t) - F (0, S^n_0) & = \sum_{[t_j, t_{j + 1}] \in \pi_n} (F (t_{j + 1} \wedge t, S^n_{(t_{j + 1} \wedge t) -}) - F (t_j \wedge t, S^n_{(t_j \wedge t) -})) \\
  		&\quad + F(t, S^n_t) - F(t, S^n_{t-}) \\
  		& = \sum_{[t_j, t_{j + 1}] \in \pi_n} (F (t_{j + 1} \wedge t, S^n_{(t_{j + 1} \wedge t) -}) - F (t_j \wedge t, S^n_{(t_j \wedge t) -})) + o (1).
  	\ea
  Consider $j$ with $t_{j + 1} \leqslant t$ and split up the difference as
  follows:
  \begin{equation*}
    F (t_{j + 1}, S^n_{t_{j + 1} -}) - F (t_j, S^n_{t_j -}) = (F (t_{j + 1},
    S^n_{t_{j + 1} -}) - F (t_j, S^n_{t_j})) + (F (t_j, S^n_{t_j}) - F (t_j,
    S^n_{t_j -})) .
  \end{equation*}
  Now $S^n_{t_{j + 1} -} (s) = S^n_{t_j} (s)$ for all $s \in [0, t_{j + 1}]$,
  and therefore the first term on the right hand side is simply
  \[ F (t_{j + 1}, S^n_{t_{j + 1} -}) - F (t_j, S^n_{t_j}) = \int_{t_j}^{t_{j
     + 1}} \mathcal{D}F (r, S^n_{t_j}) \dd r, \]
  from where we easily get (using that the mesh size of $(\pi_n)$ converges to zero)
  \[ \lim_{n \rightarrow \infty} \sum_{[t_j, t_{j + 1}] \in \pi_n} (F (t_{j +
     1} \wedge t, S^n_{(t_{j + 1} \wedge t) -}) - F (t_j \wedge t, S^n_{(t_j
     \wedge t)})) = \int_0^t \mathcal{D}F (r, S_r) \dd r. \]
  It remains to consider the term
	\[
    	F (t_j, S^n_{t_j}) - F (t_j, S^n_{t_j -}) = F (t_j, S^{n, S_{t_j, t_{j + 1}}}_{t_j -}) - F (t_j, S^n_{t_j -}),
  	\]
  where $S_{t_j, t_{j + 1}} \assign S (t_{j + 1}) - S (t_j)$ and $S^{n, x}_{t_j-}
  (s) \assign S^n_{t_j} (s) +\1_{[t_j,T]} (s) x$. By Taylor's
  formula and the definition of the vertical derivative, we have
	\ba
    	F (t_j, S^{n, S_{t_j, t_{j + 1}}}_{t_j -}) - F (t_j, S^n_{t_j -}) & = \sum_{k=1}^p \frac{\nabla_{\omega}^k F (t_j, S_{t_j -}^n)}{k!} (S (t_{j + 1} \wedge t) - S (t_j \wedge t))^k\\
    	&\quad + \frac{1}{(p - 1) !} \int_0^1 \dd \lambda (1 - \lambda)^{p - 1} (S (t_{j + 1} \wedge t) - S(t_j \wedge t))^p \\
    	&\hspace{60pt} \times \left(\nabla_{\omega}^p F (t_j, S_{t_j -}^{n, \lambda S_{t_j, t_{j + 1}}}) -    \nabla_{\omega}^p F (t_j, S_{t_j -}^n) \right).
	\ea
  Now we sum over $[t_j, t_{j + 1}] \in \pi_n$ and see as in Theorem~\ref{thm:follmer-ito} that the correction term vanishes for $n \rightarrow \infty$. Moreover, since $S \in V_p(\pi)$ we have
  	\[
  		\lim_{n \to \infty} \sum_{[t_j, t_{j + 1}] \in \pi_n} \frac{\nabla_{\omega}^p F (t_j, S_{t_j -}^n)}{p!} (S (t_{j + 1} \wedge t) - S (t_j \wedge t))^p = \frac{1}{p!} \int_0^t \nabla_\omega^p F(s,S_s) \dd [S]^p(s),
  	\]
  	see~\cite[Lemma~5.3.7]{cont2012}. Since $F \in \bb C^{0,0}_l(\Lambda_T)$, we have 
  	\[
  		\lim_{n \rightarrow \infty} (F (t, S^n_t) - F (0, S^n_0)) = F (t, S_t) - F (0, S_0),
  	\]
  	which completes the proof.
\end{proof}

\section{Isometry relation and rough-smooth decomposition}\label{sec.isometry}

Given a path (or process)  $S \in V_p(\pi)$ with finite $p$-th variation along the sequence of partitions $(\pi_n)$, the results above may be used to derive a decomposition of regular functionals of $S$ into a rough component with non-zero $p$-th variation along $(\pi_n)$ and a smooth component with zero $p$-th variation along $(\pi_n)$. For $p=2$ such a decomposition was obtained in \cite{ananova2017} and is a pathwise analog of the decomposition of a Dirichlet process into a local martingale and a ``zero energy'' part~\cite{follmer1981b}.

For $\alpha \in (0,1)$ we write $C^\alpha([0,T],\R)$ for the $\alpha$-H\"older continuous paths from $[0,T]$ to $\R$, and $\|\cdot\|_\alpha$ denotes the $\alpha$-H\"older semi-norm.

\subsection{An  `isometry' property of the pathwise integral}
\begin{theorem}[`Isometry' formula]\label{thm.isometry} Let $p \in \N$ be an even integer, let $\alpha > ((1+\tfrac{4}{p})^{1/2}-1)/2$, let $(\pi_n)$ be a sequence of partitions with mesh size going to zero, and let $S \in V_p(\pi) \cap C^\alpha([0,T],\R)$. Let $F\in \mathbb{C}^{1,2}_b(\Lambda_T)$ such that $\nabla_\omega F \in \bb C^{1,1}_b(\Lambda_T)$. Assume furthermore that $F$ is Lipschitz-continuous with respect to $d_\infty$. Then $F(\cdot, S) \in V_p(\pi)$ and
\[
	[F(\cdot,S)]^p(t)=\int_0^t |\nabla_\omega F(s,S_s)|^p \dd[S]^p(s).
\]
\end{theorem}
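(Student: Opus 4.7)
The plan is to analyze the partition sums $\sum_{t_j\le t}|\Delta F_j|^p$ with $\Delta F_j:=F(t_{j+1},S_{t_{j+1}})-F(t_j,S_{t_j})$ and show that they converge pointwise to $t\mapsto\int_0^t|\nabla_\omega F(s,S_s)|^p\,\dd[S]^p(s)$, which is continuous because $[S]^p$ has no atoms and the integrand is bounded. The oscillation requirement $\mathrm{osc}(F(\cdot,S),\pi_n)\to 0$ is immediate from the $d_\infty$-Lipschitz hypothesis on $F$ via $\mathrm{osc}(F(\cdot,S);[t_j,t_{j+1}])\le L(|t_{j+1}-t_j|+\mathrm{osc}(S;[t_j,t_{j+1}]))$ and $\mathrm{osc}(S,\pi_n)\to 0$.

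For the bulk of the argument I would adapt the decomposition used in the proof of Theorem~\ref{thm.functional}. Replacing $S$ by its piecewise-constant approximation $S^n$ and exploiting the identity $S^n_{t_j}=S^n_{t_j-}+\Delta S_j\mathbf{1}_{[t_j,T]}$, a second-order vertical Taylor expansion at $(t_j,S^n_{t_j-})$ --- available since $\nabla_\omega F\in\mathbb{C}^{1,1}_b$ --- together with the horizontal identity $F(t_{j+1},S^n_{t_j})-F(t_j,S^n_{t_j})=\int_{t_j}^{t_{j+1}}\mathcal{D}F(r,S^n_{t_j})\,\dd r$ yields $\Delta F^n_j:=F(t_{j+1},S^n_{t_{j+1}-})-F(t_j,S^n_{t_j-})=A_j+B_j$, where $A_j:=\nabla_\omega F(t_j,S^n_{t_j-})\,\Delta S_j$ is the principal term and $|B_j|\le C\Delta t_j+C(\Delta S_j)^2+o((\Delta S_j)^2)$, uniformly in $j$ by uniform continuity of $\nabla_\omega^2 F$ on the bounded image of $S$. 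Since $p$ is even, $|\Delta F^n_j|^p=(A_j+B_j)^p$ expands by the binomial theorem and the principal sum $\sum_j A_j^p=\sum_j(\nabla_\omega F(t_j,S^n_{t_j-}))^p(\Delta S_j)^p$ converges to $\int_0^t|\nabla_\omega F|^p\,\dd[S]^p$ by the cited Lemma~5.3.7 of~\cite{cont2012}, using continuity of $\nabla_\omega F$ and $\|S^n_{t_j-}-S_{t_j}\|_\infty\le\mathrm{osc}(S,\pi_n)\to 0$.

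The cross terms $A_j^{p-k}B_j^k$ with $k\ge 1$ reduce after expansion to sums of the form $\sum_j(\Delta t_j)^\ell(\Delta S_j)^{p+k-2\ell}$ for $0\le\ell\le k$. The $\ell=0$ case is killed by the oscillation trick $\sum_j|\Delta S_j|^{p+k}\le\mathrm{osc}(S,\pi_n)^k[S]^p(t)\to 0$, while for $\ell\ge 1$ the H\"older estimate $|\Delta S_j|\le\|S\|_\alpha(\Delta t_j)^\alpha$ combined with $\sum_j(\Delta t_j)^\beta\le T|\pi_n|^{\beta-1}$ disposes of the remaining contributions. Finally the passage from $\Delta F_j^n$ back to $\Delta F_j$ is carried out by combining the mean-value estimate $||x|^p-|y|^p|\le p\max(|x|,|y|)^{p-1}|x-y|$ with the Lipschitz bound $|\Delta F_j-\Delta F_j^n|\le 2L\,\mathrm{osc}(S,\pi_n)$.

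The main obstacle is to obtain these estimates with the sharp threshold $\alpha>((1+4/p)^{1/2}-1)/2$, equivalently $p\alpha(1+\alpha)>1$. A naive bookkeeping combining the pointwise bound $\mathrm{osc}(S,\pi_n)\lesssim|\pi_n|^\alpha$ with the pointwise H\"older estimate $M_j:=\max(|\Delta F_j|,|\Delta F_j^n|)\lesssim(\Delta t_j)^\alpha$ would only yield the stronger condition $\alpha>1/p$. To recover the stated threshold one must apply the discrete H\"older inequality $\sum_j a_jb_j\le(\sum_j a_j^p)^{(p-1)/p}(\sum_j b_j^{p/(p-1)})^{1/p}$ at the level of the partition sums, absorbing one factor of $\alpha$ into the bounded quantity $\sum_j|\Delta S_j|^p\to[S]^p(t)$ and a second factor of $\alpha$ into $\mathrm{osc}(S,\pi_n)\lesssim|\pi_n|^\alpha$, so that the product of the two, weighted by $p$, must exceed $1$: this is precisely the quadratic condition $\alpha(1+\alpha)>1/p$ appearing in the statement.
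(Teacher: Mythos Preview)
Your overall strategy---decompose the increment of $F(\cdot,S)$ into a principal part proportional to $\Delta S_j$ plus a remainder, expand the $p$-th power binomially, and kill the cross terms via H\"older---is the same as the paper's. The difference lies in how the remainder is controlled, and this is where your argument has a genuine gap. The paper works directly with $\Delta F_j$, writing $\Delta F_j=\gamma_F(t_j,t_{j+1})+R_F(t_j,t_{j+1})$ with $\gamma_F(s,t)=\nabla_\omega F(s,S_s)(S(t)-S(s))$, and invokes \cite[Lemma~2.2]{ananova2017} to obtain the \emph{local} bound $|R_F(s,t)|\le C|t-s|^{\alpha+\alpha^2}$. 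The H\"older step on the cross terms then produces the factor $\big(\sum_j|R_F(t_j,t_{j+1})|^p\big)^{k/p}\le C\big(\sum_j(\Delta t_j)^{p(\alpha+\alpha^2)}\big)^{k/p}$, which vanishes precisely when $p(\alpha+\alpha^2)>1$; this is exactly where the threshold comes from.

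Your route through the piecewise-constant approximation $S^n$ gives a clean bound $|B_j|\le C\Delta t_j+C(\Delta S_j)^2$ for the remainder of $\Delta F_j^n$, and your handling of those cross terms is fine (indeed it works for any $\alpha>0$). The problem is the step ``pass from $\Delta F_j^n$ back to $\Delta F_j$'': the Lipschitz estimate $|\Delta F_j-\Delta F_j^n|\le 2L\,\mathrm{osc}(S,\pi_n)$ is a \emph{global} bound depending only on $|\pi_n|$, not on $\Delta t_j$, and no discrete H\"older rearrangement can localize it. Concretely, expanding $|\Delta F_j|^p-|\Delta F_j^n|^p$ binomially produces a term of size $N(\pi_n)\cdot\mathrm{osc}(S,\pi_n)^p$, which for uniform partitions is $\simeq|\pi_n|^{\alpha p-1}$ and vanishes only for $\alpha>1/p$, not for the weaker condition $\alpha(1+\alpha)>1/p$. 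The ``second factor of $\alpha$'' you hope to absorb into $\sum_j|\Delta S_j|^p$ is not available, because the error $\mathrm{osc}(S,\pi_n)$ is decoupled from the individual increments $\Delta S_j$. The missing ingredient is precisely what \cite[Lemma~2.2]{ananova2017} supplies: a bound on $R_F$ that is local in $|t-s|$, obtained by approximating $S$ on each interval $[s,t]$ by a \emph{finer} piecewise-constant path; the exponent $\alpha+\alpha^2$ (rather than $2\alpha$) reflects the regularity lost in that internal approximation, as explained in the remark following the theorem.
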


\begin{proof}
	The proof is similar to the case $p=2$ considered in~\cite{ananova2017}. Indeed, our assumptions allow us to apply~\cite[Lemma~2.2]{ananova2017}, which shows that there exists $C>0$, only depending on $T$, $F$, and $\|S\|_\alpha$, such that for all $0 \le s \le t \le T$
	\begin{equation}\label{eq:F-pvar-pr1}
		R_F(s,t) \assign |F(t,S_t) - F(s, S_s) - \nabla_\omega F(s,S_s) (S(t) - S(t))| \le C |t-s|^{\alpha+\alpha^2}.
	\end{equation}
	Writing also $\gamma_F(s,t) \assign \nabla_\omega F(s,S_s) (S(t) - S(s))$, we obtain
	\begin{align}\label{eq:F-pvar-pr2} \nonumber
		&\sum_{\substack{[t_j, t_{j + 1}] \in \pi_n:\\ t_{j+1} \le t}} |F(t_{j+1}, S_{t_{j + 1}}) - F(t_j, S_{t_j})|^p = \sum_{\substack{[t_j, t_{j + 1}] \in \pi_n:\\ t_{j+1} \le t}} |R_F(t_j, t_{j+1}) + \gamma_F(t_j,t_{j+1})|^p \\ 
		&\hspace{30pt} = \sum_{\substack{[t_j, t_{j + 1}] \in \pi_n:\\ t_{j+1} \le t}} |\gamma_F(t_j,t_{j+1})|^p + \sum_{k=1}^p \binom{p}{k} \sum_{\substack{[t_j, t_{j + 1}] \in \pi_n:\\ t_{j+1} \le t}} R_F(t_j, t_{j+1})^k \gamma_F(t_j,t_{j+1})^{p-k}.
	\end{align}
	Since $S \in V_p(\pi)$ we have
	\begin{equation}\label{eq:F-pvar-pr3}
		\lim_{n \to \infty} \sum_{\substack{[t_j, t_{j + 1}] \in \pi_n:\\ t_{j+1} \le t}} |\gamma_F(t_j, t_{j+1})|^{p} = \int_0^t |\nabla_\omega F(s,S(s))|^p \dd[S]^p(s).
	\end{equation}
	Our result follows once we show that the double sum on the right hand side of~\eqref{eq:F-pvar-pr2} vanishes. For that purpose let $k \in \{1,\dots,p\}$ and write $q_k \assign p/(p-k) \in [1,\infty]$ and let $q_k' = p/k$ be its conjugate exponent. H\"older's inequality yields
	\ba
		& \Big| \sum_{\substack{[t_j, t_{j + 1}] \in \pi_n:\\ t_{j+1} \le t}} R_F(t_j, t_{j+1})^k \gamma_F(t_j,t_{j+1})^{p-k} \Big| \\
		& \le \Big( \sum_{\substack{[t_j, t_{j + 1}] \in \pi_n:\\ t_{j+1} \le t}} |R_F(t_j, t_{j+1})|^{k q_k'} \Big)^{1/q_k'} \Big( \sum_{\substack{[t_j, t_{j + 1}] \in \pi_n:\\ t_{j+1} \le t}} |\gamma_F(t_j, t_{j+1})|^{(p-k) q_k} \Big)^{1/q_k} \\
		& = \Big( \sum_{\substack{[t_j, t_{j + 1}] \in \pi_n:\\ t_{j+1} \le t}} |R_F(t_j, t_{j+1})|^{p} \Big)^{k/p} \Big( \sum_{\substack{[t_j, t_{j + 1}] \in \pi_n:\\ t_{j+1} \le t}} |\gamma_F(t_j, t_{j+1})|^{p} \Big)^{(p-k)/p}.
	\ea
	By~\eqref{eq:F-pvar-pr1} the first sum on the right hand side is bounded by
	\ba
		\Big( \sum_{\substack{[t_j, t_{j + 1}] \in \pi_n:\\ t_{j+1} \le t}} |R_F(t_j, t_{j+1})|^{p} \Big)^{k/p} & \lesssim \Big( \sum_{\substack{[t_j, t_{j + 1}] \in \pi_n:\\ t_{j+1} \le t}} |t_{j+1} - t_j|^{p(\alpha+\alpha^2)} \Big)^{k/p}\\
		& \le (t \times \max\{|t_{j+1} - t_j|^{p(\alpha + \alpha^2) - 1}: [t_j,t_{j+1}] \in \pi_n, t_{j+1} \le t\})^{k/p},
	\ea
	which converges to zero for $n \to \infty$ because $p(\alpha+\alpha^2) > 1$ (which is equivalent to our assumption $\alpha > (\sqrt{1+\frac{4}{p}}-1)/2$) and because $k > 0$. Moreover, by~\eqref{eq:F-pvar-pr3} the sum over $|\gamma_F(t_j, t_{j+1})|^p$ is bounded and this concludes the proof.
\end{proof}

\begin{remark}
	\begin{enumerate}
		\item Keeping the example of the (fractional) Brownian motion in mind, we would typically expect paths in $V_p(\pi)$ to be $(1/p-\kappa)$-H\"older continuous for any $\kappa > 0$. Since for $f(x) = (1+x)^{1/2}$ we have
			\[
				f''(x) = -\frac{1}{4} (1+x)^{-3/2} < 0,
			\]
			we have $f(x) < f(0) + f'(0) x$ for all $x > 0$, and therefore
			\[
				\frac{(1+\frac{4}{p})^{1/2}-1}{2} < \frac{\frac{1}{2}\frac{4}{p}}{2} = \frac{1}{p},
			\]
			which means that in Theorem~\ref{thm.decomposition} we can take $\alpha < 1/p$ and our constraint on the H\"older regularity is not unreasonable.
		\item In fact the complicated constraint on $\alpha$ comes from inequality~\eqref{eq:F-pvar-pr1}, which only gives us a control of order $|t-s|^{\alpha+\alpha^2}$ for $R_F(s,t)$, while $|t-s|^{2\alpha}$ might seem more natural (after all $R_F(s,t)$ is something like the remainder in a first order Taylor expansion). The difficulty is that horizontal differentiability is a very weak notion, which a priori gives us no control on $R_F(s,t)$. To obtain any bounds at all we first need to approximate our path by piecewise linear or piecewise constant paths, and through this approximation procedure we lose a little bit of regularity, see~\cite[Lemma~2.2]{ananova2017} for details. We could improve the control of $R_F(s,t)$ by taking a higher order Taylor expansion (which would require more regularity from $F$), but we do not need this here.
	\end{enumerate}

\end{remark}

\subsection{Pathwise rough-smooth decomposition}

Using the above result we may derive, as in \cite{ananova2017}, a pathwise `signal plus noise' decomposition for regular functionals of paths with strictly increasing $p$-th variation. Let
\[
	\mathbb{C}^{1,p}_b(S)=\{ F(\cdot,S) , F\in \mathbb{C}^{1,p}_b(\Lambda_T) \}\subset V_p(\pi).
\]
The following result extends the pathwise rough-smooth decomposition of paths in $\mathbb{C}^{1,p}_b(S)$, obtained in \cite{ananova2017} for $p=2$, to higher values of $p$. 

\begin{theorem}\label{thm.decomposition}
Let $p \in \N$ be an even integer, let $\alpha>((1+\tfrac{4}{p})^{1/2}-1)/2$, let $(\pi_n)$ be a sequence of partitions with vanishing mesh size $|\pi_n|\to 0$ and let $S\in V_p(\pi) \cap C^\alpha([0,T],\R)$ be a path with strictly increasing $p$-th variation $[S]^p$ along $(\pi_n)$. 
Then any  $X\in \mathbb{C}^{1,p}_b(S)$ admits a unique decomposition $X= X(0)+ A+M$ where  $M(t)=
\int_0^t \phi(s)\dd S(s)$ is a pathwise integral defined as in Theorem~\ref{thm.functional}, and where $[A]^p = 0$.
\end{theorem}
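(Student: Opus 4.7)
The plan is to combine the functional change of variable formula (Theorem~\ref{thm.functional}) with the isometry identity (Theorem~\ref{thm.isometry}), exploiting the strict monotonicity of $[S]^p$ to force uniqueness of the integrand. For existence, I will apply Theorem~\ref{thm.functional} directly: since $X = F(\cdot, S)$ for some $F \in \mathbb{C}^{1,p}_b(\Lambda_T)$, this yields $X = X(0) + A + M$ with
\[
  A(t) \assign \int_0^t \mathcal{D}F(s,S_s)\,\dd s + \tfrac{1}{p!}\int_0^t \nabla_\omega^p F(s,S_s)\,\dd [S]^p(s), \quad M(t) \assign \int_0^t \nabla_\omega F(s,S_s)\,\dd S(s).
\]
The candidate $A$ is continuous and of finite total variation $V(A)$, so the crude bound $\sum_j |A(t_{j+1}) - A(t_j)|^p \le (\max_j |A(t_{j+1}) - A(t_j)|)^{p-1}\, V(A)$ together with uniform continuity of $A$, $|\pi_n|\to 0$ and $p \ge 2$ yields $[A]^p \equiv 0$.

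For uniqueness, suppose $X = X(0) + A_i + M_i$ for $i=1,2$, with $M_i(t) = \int_0^t \nabla_\omega F_i(s,S_s)\,\dd S(s)$ for some $F_i \in \mathbb{C}^{1,p}_b(\Lambda_T)$. Then $M_1 - M_2 = A_2 - A_1$, and the elementary inequality $|a-b|^p \le 2^{p-1}(|a|^p + |b|^p)$ applied to the increments gives $[M_1 - M_2]^p \equiv 0$. Linearity of the compensated Riemann sums (using $\nabla_\omega^k(F_1 - F_2) = \nabla_\omega^k F_1 - \nabla_\omega^k F_2$) identifies $M_1 - M_2$ with $\int_0^\cdot \nabla_\omega G(s,S_s)\,\dd S(s)$, where $G \assign F_1 - F_2 \in \mathbb{C}^{1,p}_b(\Lambda_T)$. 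Applying Theorem~\ref{thm.functional} to $G$ itself gives $G(\cdot,S) - G(0,S_0) = A_G + (M_1 - M_2)$ with $A_G$ continuous of bounded variation; a binomial/H\"older argument in the spirit of the proof of Theorem~\ref{thm.isometry} then identifies $[G(\cdot,S)]^p$ with $[M_1 - M_2]^p \equiv 0$. Invoking the isometry now yields
\[
  \int_0^t |\nabla_\omega G(s,S_s)|^p\,\dd [S]^p(s) = [G(\cdot,S)]^p(t) = 0 \qquad \text{for every } t\in[0,T],
\]
and strict monotonicity of $[S]^p$ combined with continuity of $s \mapsto \nabla_\omega G(s,S_s)$ forces $\nabla_\omega G(s,S_s)\equiv 0$, whence $M_1 \equiv M_2$ and $A_1 \equiv A_2$.

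I expect the main obstacle to be the compatibility of regularity hypotheses rather than any deep analytic difficulty. Theorem~\ref{thm.isometry} requires in addition $\nabla_\omega G \in \mathbb{C}^{1,1}_b(\Lambda_T)$ and $d_\infty$-Lipschitz continuity of $G$, neither of which is automatically granted by $G \in \mathbb{C}^{1,p}_b(\Lambda_T)$, since that class only imposes horizontal differentiability of $G$ itself and no horizontal regularity of its vertical derivatives. The natural remedy is either to tacitly restrict $\mathbb{C}^{1,p}_b(S)$ to representatives enjoying the additional regularity needed by the isometry (the analogue of what is done in the $p=2$ setting of~\cite{ananova2017}), or to add these assumptions to the hypotheses of the theorem; the auxiliary fact that a continuous bounded-variation perturbation does not affect the $p$-th variation along $(\pi_n)$ is routine via the same binomial bookkeeping as in the proof of the isometry.
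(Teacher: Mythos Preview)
Your argument is essentially the same as the paper's: both deduce $[M_1-M_2]^p\equiv 0$ from $[A_i]^p\equiv 0$, then invoke Theorem~\ref{thm.isometry} for $G=F_1-F_2$ together with the strict increase of $[S]^p$ to force $\nabla_\omega G(\cdot,S)\equiv 0$. You are in fact more careful on two points the paper leaves implicit: you spell out existence via Theorem~\ref{thm.functional} (the paper only writes the uniqueness part), and you explain why the isometry, which is stated for $[G(\cdot,S)]^p$, transfers to $[M_1-M_2]^p$ by peeling off a bounded-variation remainder; the paper jumps straight to $[M-\tilde M]^p(T)=\int_0^T|\nabla_\omega(F-\tilde F)|^p\,\dd[S]^p$.

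The regularity mismatch you flag is genuine and applies equally to the paper's own proof: Theorem~\ref{thm.isometry} requires $G\in\mathbb{C}^{1,2}_b(\Lambda_T)$, $\nabla_\omega G\in\mathbb{C}^{1,1}_b(\Lambda_T)$ and $d_\infty$-Lipschitz continuity of $G$, none of which follow automatically from $F_i\in\mathbb{C}^{1,p}_b(\Lambda_T)$. The paper silently presupposes that admissible representatives in $\mathbb{C}^{1,p}_b(S)$ meet those extra hypotheses, just as in the $p=2$ treatment of~\cite{ananova2017}; your proposed remedy (restrict or strengthen the hypotheses) is exactly the right fix.
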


\begin{proof}
Consider two such decompositions $X - X_0 = A + M = \tilde A + \tilde M$. Since $[A]^p = [\tilde A]^p = 0$ and
\[
	|(A - \tilde A)(t) - (A - \tilde A)(s)|^p \lesssim |A(t) - A(s)|^p + |\tilde A(t) - \tilde A(s)|^p,
\]
we get $A - \tilde A \in V_p(\pi)$ and $[A- \tilde A]^p \equiv 0$. But then also $[M - \tilde M]^p = [A - \tilde A]^p \equiv 0$. Now
\[
	M(t) = \int_0^t \nabla_\omega F(s, S_s) \dd S(s),\qquad \tilde M(t) = \int_0^t \nabla_\omega \tilde F(s,S_s) \dd S(s)
\]
for some $F, \tilde F \in C^{1,p}_b(\Lambda_T)$, and by Theorem~\ref{thm.isometry} we have
\[
	0 = [M - \tilde M]^p(T) = \int_0^T | \nabla_\omega(F - \tilde F)(s,S_s)|^p \dd [S]^p(s).
\]
Since $(F - \tilde F)(s,S_s)$ is continuous in $s$ and $[S]^p$ is strictly increasing we have $\nabla_\omega(F - \tilde F)(\cdot, S) \equiv 0$. This means that $M - \tilde M \equiv 0$, and then also $A - \tilde A \equiv 0$.
\end{proof}

\section{Local times and higher order Wuermli formula}

An extension of \follmer's pathwise \ito formula to less regular functions was given by Wuermli~\cite{wuermli1980} in her (unpublished) thesis. Wuermli considered paths with finite quadratic variation which further admit a {\it  local time} along a sequence of partitions, and derive a pathwise change of variable formula  for more general functions that need not be $C^2$. Depending on  the notion of convergence used to define the local time, one then obtains  Tanaka-type change of variable formulas for various classes of functions; convergence in stronger topologies leads to a formula valid for a larger class of functions. Wuermli \cite{wuermli1980} assumed weak convergence in $L^2$ in the space variable (see also \cite{bertoin1987}) and some recent works have extended the approach to other topologies, for example uniform convergence or weak convergence in $L^q$~\cite{perkowski2015,davis2018}. To a certain extent Wuermli's approach can be generalized to our higher order setting, but as we will discuss below in the higher order case we do not expect to have convergence of the pathwise local times in strong topologies.

	To derive the generalization of Wuermli's formula, we consider $f \in C^{p-2}$ with absolutely continuous $f^{(p-2)}$ and
apply the Taylor expansion of order $p-2$ with integral remainder to obtain
\[
	f (b) - f (a) = \sum_{k=1}^{p-2} \frac{f^{(k)}(a)}{k!}(b-a)^k + \int_a^b \frac{f^{(p-1)}(x)}{(p-2)!} (b-x)^{p-2} \dd x.
\]
Assume now that $f^{(p-1)}$ is of bounded variation. Since every bounded variation function $f^{(p-1)}$ is regulated (l\`{a}dl\`{a}g) and therefore has only countably many jumps, its  c\`adl\`ag version is also a weak derivative of $f^{(p-2)}$, and from now on we only work with this version. Since $(b-\cdot)^{p-2}$ is continuous, the integration by parts rule for the Lebesgue-Stieltjes integral applies in the case $b\ge a$ and we obtain
\ba
	\int_a^b \frac{f^{(p-1)}(x)}{(p-2)!} (b-x)^{p-2} \dd x & = f^{(p-1)}(b) \frac{-(b-b)^{p-1}}{(p-1)!} - f^{(p-1)}(a)\frac{-(b-a)^{p-1}}{(p-1)!} \\
	&\quad - \int_{(a,b]} \frac{-(b-x)^{p-1}}{(p-1)!} \dd f^{(p-1)}(x) \\
	& = f^{(p-1)}(a)\frac{(b-a)^{p-1}}{(p-1)!} + \int_{(a,b]} \frac{(b-x)^{p-1}}{(p-1)!} \dd f^{(p-1)}(x).
\ea
Similarly we get for $b < a$
\ba
	\int_a^b \frac{f^{(p-1)}(x)}{(p-2)!} (b-x)^{p-2} \dd x & = - \int_b^a \frac{f^{(p-1)}(x)}{(p-2)!} (b-x)^{p-2} \dd x \\
	& = f^{(p-1)}(a)\frac{(b-a)^{p-1}}{(p-1)!} - \int_{(b,a]} \frac{(b-x)^{p-1}}{(p-1)!} \dd f^{(p-1)}(x),
\ea
and therefore
\ba
	f (b) - f (a) & = \sum_{k=1}^{p-1} \frac{f^{(k)}(a)}{k!}(b-a)^k + \op{sign}(b-a)\int_{\lb a,b\rb} \frac{(b-x)^{p-1}}{(p-1)!}  \dd f^{(p-1)}(x) \\
	& = \sum_{k=1}^{p-1} \frac{f^{(k)}(a)}{k!}(b-a)^k + \op{sign}(b-a)^{p} \int_{\lb a,b\rb } \frac{|b-x|^{p-1}}{(p-1)!}  \dd f^{(p-1)}(x) \\
	& = \sum_{k=1}^{p-1} \frac{f^{(k)}(a)}{k!}(b-a)^k + \int_{\R} \1_{\lb a,b \rb}(x) \frac{\op{sign}(b-a)^p|b-x|^{p-1}}{(p-1)!}  \dd f^{(p-1)}(x),
\ea
with the notation
\[
	\lb a,b \rb = \begin{cases} (a,b], & b \ge a, \\ (b,a], & a \le b. \end{cases}
\]
For any partition $\sigma$ of $[0, T]$, we  define
\[
	L^{\sigma, p - 1}_t(x) \assign \sum_{t_j \in \sigma} \op{sign}(S_{t_{j+1} \wedge t} - S(t_j \wedge t))^p \1_{\lb S(t_j \wedge t), S_{t_{j + 1} \wedge t}\rb} (x) | S(t_{j + 1} \wedge t) - x |^{p - 1}.
\]
To extend Theorem~\ref{thm:follmer-ito} to $S\in V_p(\pi)$, we first note that the following identity holds for any partition $\pi_n$:
\begin{align}\label{eq:exact-lt} \nonumber
	  f (S_t) - f (S_0)  & = \sum_{[t_j, t_{j + 1}] \in \pi_n} \sum_{k=1}^{p-1} \frac{f^{(k)} (S_{t_j})}{k!} (S(t_{j + 1} \wedge t) - S(t_j \wedge t))^k \\
	  &\quad  + \frac{1}{(p-1)!} \int_{\mathbb{R}} L^{\pi_n, p - 1}_t (x) \dd f^{(p - 1)} (x) .
\end{align}
To obtain a change of variable formula for less regular functions, we need the last term to converge as the partition is refined. This motivates the following definition:
\begin{definition}[Local time of order $p$]
	Let $p\in \mathbb{N}$ be an even integer and let $q \in [1,\infty]$. A  continuous path $S \in C([0,T],\R)$ has an  \emph{$L^q$-local time of order $p-1$} along a sequence of
partitions $\pi=(\pi_n)_{n\geq 1}$ if $\op{osc}(S,\pi_n)\to 0$ and
	\[
		L^{\pi_n, p - 1}_t(\cdot) = \sum_{t_j \in \pi}  \1_{\lb S(t_j \wedge t), S_{t_{j + 1} \wedge t}\rb} (\cdot) | S(t_{j + 1} \wedge t) - \cdot |^{p - 1}
	\]
	converges weakly in $L^q(\R)$ to a weakly continuous map $L\colon[0,T]\to L^q(\mathbb{R})$ which we call the \emph{order $p$ local time} of $S$. We denote $ \mc L_p^q(\pi)$ the set of continuous paths $S$ with this property. 
\end{definition}
Intuitively, the limit $L_t(x)$ then measures the rate at which the path $S$ accumulates p-th order variation near $x$. This definition is further justified by the following result, which is a  `pathwise Tanaka formula' \cite{wuermli1980} for paths of arbitrary regularity:
\begin{theorem}[Pathwise `Tanaka' formula for paths with finite p-th order variation]
	Let $p \in 2\N$ be an even integer,  $q \in [1,\infty]$ with conjugate exponent $q'=q/(q-1)$.  Let $f \in C^{p-1}(\R,\R)$ and assume that $f^{(p-1)}$ is weakly differentiable with derivative in $L^{q'}(\R)$. Then for any $S \in \mc L^q_p(\pi)$ the  pointwise limit of compensated Riemann sums
	\[
		\int_0^t f' (S(s)) \dd S(s) \assign \lim_{n \rightarrow \infty} \sum_{[t_j, t_{j + 1}] \in \pi_n} \sum_{k=1}^{p-1} \frac{f^{(k)} (S(t_j))}{k!}
   (S(t_{j + 1} \wedge t) - S(t_j \wedge t))^k
	\]
	exists and the following change of variable formula holds:
	\[
		f(S(t)) - f(S(0)) = \int_0^t f' (S(s)) \dd S(s) + \frac{1}{(p-1)!} \int_\R f^{(p)}(x) L_t(x) \dd x.
	\]
\end{theorem}

\begin{proof}
	The formula~\eqref{eq:exact-lt} is exact and does not involve any error terms. Noting that $L^{q'}(\R) \subset (L^q)^\ast(\R)$ also for $q=\infty$, our assumptions imply that the second term on the right hand side of~\eqref{eq:exact-lt} converges, so the result follows.
\end{proof}

To  justify the name ``local time'' for $L$, we illustrate how $L$ is related to classical definitions of local times by restricting our attention to a particular sequence of partitions \cite{chacon1981,karandikar1983}:
\begin{definition}\label{def:dyadic-lebesgue}
	Let $S \in C([0,T], \R)$. The \emph{dyadic Lebesgue partition} generated by $S$ is defined via $\tau^n_0 \assign 0$ and
	\[
		\tau^n_{j+1} \assign \inf\{ t \ge \tau^n_j: S_t \in 2^{-n} \Z \setminus\{ S_{\tau^n_j}\} \},
	\]
	and then $\pi_n = (\{\tau^n_j : j \in \N_0\} \cap [0,T]) \cup \{T\}$.
\end{definition}

\begin{lemma}\label{lem:lt-upcrossings}
	Let $p \in \N$ be even, let $S \in C([0,T], \R)$ and let $(\pi_n)$ be the dyadic Lebesgue partition generated by $S$. Given an interval $[a,b]$ we write $U_t([a,b])$ for the number of upcrossings of $[a,b]$ that $S$ performs until time $t$. Let $x \in \R$ and let $I^n_k = (k 2^{-n},(k+1)2^{-n}]$ be the unique dyadic interval of generation $n$ with $x \in I^n_k$. Then
	\[
		L^{\pi_n}_t(x) = (|(k+1)2^{-n} - x|^{p-1} + |x - k2^{-n}|^{p-1})U_t(I^n_k) + O(2^{-n(p-1)}).
	\]
\end{lemma}

\begin{proof}
	We have $\1_{\lb S_{\tau_j^n}, S_{\tau^n_{j + 1}}\rb} (x) \neq 0$ if either $S_{\tau^n_j} = k 2^{-n}$ and $S_{\tau^n_{j+1}} = (k+1) 2^{-n}$ (i.e. $S$ performs an upcrossing of $I^n_k$), or $S_{\tau^n_j} = (k+1) 2^{-n}$ and $S_{\tau^n_{j+1}} = k 2^{-n}$ (i.e. $S$ performs a downcrossing of $I^n_k$). In the first case we have to add $|(k+1)2^{-n} - x|^{p-1}$ to $L^{\pi_n}_t(x)$, and in the second case we add $(-1)^p | x - k 2^{-n}|^{p-1} = | x - k 2^{-n}|^{p-1}$. Therefore, we obtain
\[
	L^{\pi_n}_t(x) = |(k+1)2^{-n} - x|^{p-1} U_t(I^n_k) + |x - k2^{-n}|^{p-1} D_t(I^n_k) + O(2^{-n(p-1)}),
\]
and since up- and downcrossings of $I^n_k$ differ by at most one, our claim follows.
\end{proof}

Note that the expression for $L^{\pi_n}_t$ strongly fluctuates on $I^n_k$. For $x \simeq k2^{-n}$ and $x \simeq (k+1) 2^{-n}$ the factor in front of $U_t(I^n_k)$ is $\simeq 2^{-n(p-1)}$, while for $x = (2k+1) 2^{-n-1}$ we get the factor $2^{-n(p-1)} 2^{p-2}$. Therefore, we do not expect $L^{\pi_n}_t(x)$ to converge uniformly or even pointwise in $x$ as $n\to \infty$ (unless if $p=2$).

\begin{lemma}
	In the setting of Lemma~\ref{lem:lt-upcrossings} set
	\[
		\tilde L^{\pi_n}_t(x) \assign \sum_{k \in \Z} 2^{-n(p-1)} U_t(I^n_k) \1_{I^n_k}(x).
	\]
	Let $q \in (1,\infty)$. If $\tilde L^{\pi_n}_t$ converges weakly in $L^q(\R)$ to a limit $\tilde L_t$, then $L^{\pi_n}_t$ converges weakly in $L^q(\R)$ to $(2/p) \tilde L_t$.
\end{lemma}

\begin{proof}
	Let us introduce an averaging operator,
	\[
		(\mc A_n f)(x) \assign \sum_{k \in \Z} 2^n \int_{I^n_k} f(y) \dd y \, \1_{I^n_k}(x).
	\]
	Since
	\[
		\int_{I^n_k} (|(k+1)2^{-n} - x|^{p-1} + |x - k2^{-n}|^{p-1}) \dd x = 2 \int_0^{2^{-n}} x^{p-1} \dd x = \frac{2}{p} 2^{-n p},
	\]
	we have $\tilde L^{\pi_n}_t = \tfrac{p}{2} \mc A_n L^{\pi_n}_t + O(2^{-n(p-1)})$, with a compactly supported remainder $O(2^{-n(p-1)})$. We claim that if $(f_n)$ is a sequence of functions for which $\mc A_n f_n$ converges weakly in $L^q(\R)$ and for which $|f_n| \le C |\mc A_n f_n|$, then also $(f_n)$ converges weakly in $L^q(\R)$ to the same limit, which will imply our claim. To show this, let $f$ be the limit of $\mc A_n$ and let $g \in L^{q'}(\R)$. We have $\langle \mc A_n \varphi, \psi \rangle = \langle \mc A_n \varphi, \mc A_n \psi \rangle = \langle \varphi, \mc A_n \psi \rangle$ for all $\varphi, \psi$, and therefore
	\ba
		|\langle f_n - f, g \rangle| & \le |\langle f_n - \mc A_n f_n, g \rangle| + |\langle \mc A_n f_n - f, g \rangle| \\
		& = |\langle f_n, g - \mc A_n g \rangle| + |\langle \mc A_n f_n - f, g \rangle| \\
		& \le \| f_n \|_{L^q} \| g - \mc A_n g \|_{L^{q'}} + |\langle \mc A_n f_n - f, g \rangle|.
	\ea
	The second term on the right hand side converges to zero by assumption. For the first term we note that by assumption $\| f_n \|_{L^q} \le \| \mc A_n f_n \|_{L^q}$, which is uniformly bounded in $n$ because $(\mc A_n f_n)$ converges weakly in $L^q$. The proof is therefore complete once we show that $\lim_{n \to \infty} \| g - \mc A_n g \|_{L^{q'}} = 0$ for all $g \in L^{q'}$. But this easily follows from the fact that the continuous and compactly supported functions are dense in $L^{q'}$.
\end{proof}
In fact, we conjecture that, for fractional Brownian motion, this notion of local time defined along the dyadic Lebesge partition coincides, up to a constant, with the usual concept of local time defined as the density of the occupation measure:
\begin{conjecture}
	Let $B$ be the fractional Brownian motion with Hurst parameter $H \in (0,1)$, and let $(\pi_n)$ be the dyadic Lebesgue partition generated by $B$. Let $I^n_k$ and $U_t$ be as in Lemma~\ref{lem:lt-upcrossings} (where now we count the upcrossings of $B$ instead of $S$). We conjecture that
	\[
		\tilde{L}^{\pi_n}_t(x) \assign \sum_{k \in \Z} 2^{-n(1/H-1)} U_t(I^n_k) \1_{I^n_k}(x)
	\]
almost-surely	converges uniformly in $(t,x) \in [0,T] \times \R$ to $\ell_t(x) \E[|B_1|^{1/H}]/2$, where $\ell$ is the local time of $B$, i.e. the Radon-Nikodym derivative of the occupation measure $A \mapsto \int_0^t \1_A(B(s)) \dd s$ with respect to the Lebesgue measure, see e.g.~\cite{Biagini2008}. In particular, for any even integer $p\in 2\mathbb{N}$,  $B \in \mc L^{p-1}_q(\pi_n)$ for any $q \in (1,\infty)$.
	\end{conjecture}
	This result is well known for $H = 1/2$, see e.g.~\cite{chacon1981,perkowski2015}.
	In the general case $H \in (0,1)$, it is natural to expect that
	\[
		\mu^n([0,t]) \assign \sum_{j=0}^\infty 2^{-n/H} \1_{\tau^n_{j+1} \le t} \xrightarrow{n \to \infty} [B]^{1/H}_t = \E[|B_1|^{1/H}] t,
	\]
	which would be an extension of the convergence result of~\cite{Rogers1997} from deterministic partitions to the Lebesgue partition generated by $B$. Moreover, we know that the local time $\ell$ of the fractional Brownian motion satisfies
	\[
		\ell_t(x) = \lim_{n \to \infty} \sum_{k \in \Z} 2^n \int_0^t \1_{I^n_k}(B_s) \dd s \1_{I^n_k}(x).
	\]
	If we \emph{formally} replace the Lebesgue measure in the integral by $\E[|B_1|^{1/H}]^{-1} \mu^n$, then we get
	\ba
		\ell_t(x) & = \E[|B_1|^{1/H}]^{-1}\lim_{n \to \infty}  \sum_{k \in \Z} 2^n \int_0^t \1_{I^n_k}(B_s) \mu^n(\dd s) \1_{I^n_k}(x) \\
		& = \E[|B_1|^{1/H}]^{-1}\lim_{n \to \infty} \sum_{k \in \Z} 2^{n - n/H} \sum_{j: \tau^n_{j+1} \le t} \1_{I^n_k}(B_{\tau^n_j}) \1_{I^n_k}(x) \\
		& = \E[|B_1|^{1/H}]^{-1}\lim_{n \to \infty} \sum_{k \in \Z} 2^{n - n/H} (D_t(I^n_k) + U_t(I^n_{k+1})) \1_{I^n_k}(x),
	\ea
	and if we further assume that $2^{n - n/H} |U_t(I^n_{k+1}) - U_t(I^n_k)|\to 0$ then our conjecture formally follows. 

If the conjecture holds, then  for any $p\in 2\N$ and $B$ a typical sample path of the fractional Brownian motion with Hurst index $1/p$ and $f \in C^{p-1}$ with weak $p$-th derivative $f^{(p)} \in L^q$ for any $q \in (1,\infty)$:
	\begin{equation}\label{eq:occupation-ito}
		f(B(t)) - f(B(0)) = \int_0^t f' (B(s)) \dd B(s) + \frac{\E[|B_1|^p]}{p!} \int_\R f^{(p)}(x) \ell_t(x) \dd x,
	\end{equation}
	where $\ell$ is the local time of $B$ and
	\[
		\int_0^t f' (B(s)) \dd B(s) \assign \lim_{n \rightarrow \infty} \sum_{[t_j, t_{j + 1}] \in \pi_n} \sum_{k=1}^{p-1} \frac{f^{(k)} (B(t_j))}{k!}
   (B(t_{j + 1} \wedge t) - B(t_j \wedge t))^k.
	\]
	By Theorem~\ref{thm:follmer-ito} the formula holds for $f \in C^p$, because then
	\[
		\frac{\E[|B_1|^p]}{p!} \int_\R f^{(p)}(x) \ell_t(x) \dd x = \frac{\E[|B_1|^p]}{p!} \int_0^t f^{(p)}(S(s)) \dd s = \frac{1}{p!} \int_0^t f^{(p)}(S(s)) \dd [S]^p_s,
	\]
	which adds further credibility to our conjecture.

\section{Extension to multidimensional paths}\label{sec.multidimensional}
As in the case $p=2$, the set $V_p(\pi)$ is not stable under linear combinations: for $S_1,S_2\in V_p(\pi)$, expanding $( (S_1(t_{j + 1}) - S_1(t_j)+ S_2(t_{j + 1}) - S_2(t_j))^p$ yields many cross terms whose sum cannot be controlled in general  as the  partition is refined.
The extension of Definition \ref{def:p-var} to vector-valued functions $S=(S_1,...,S_d)$ therefore requires some care. The original approach of \follmer~\cite{follmer1981} was to require that $S_i, S_i+S_j \in V_p(\pi)$. We propose here a slightly different formulation, which is equivalent to \follmer's construction for $p=2$ but easier to relate to other approaches, such as  rough path integration.

\subsection{Tensor formulation}

Define $T_p(\mathbb{R}^d) =\mathbb{R}^d\otimes ...\otimes \mathbb{R}^d$ as the space of $p$-tensors on  $\mathbb{R}^d$.
A \emph{symmetric $p$-tensor}  is a tensor $T\in T_p(\mathbb{R}^d)$ that is invariant under any permutation   $\sigma$ of its arguments:
$$
\forall  (v_{1},v_{2},\ldots ,v_{p})\in (\mathbb{R}^d)^p,\qquad
 {\displaystyle T(v_{1},v_{2},\ldots ,v_{p})=T(v_{\sigma 1},v_{\sigma 2},\ldots ,v_{\sigma p})}.
$$ 
 The coordinates $(T_{i_{1}i_{2}\cdots i_{p}})$ of a symmetric tensor of order $p$  satisfy
$$
{\displaystyle T_{i_{1}i_{2}\cdots i_{p}}=T_{i_{\sigma 1}i_{\sigma 2}\cdots i_{\sigma p}}.}$$
The space ${\rm Sym}_p(\mathbb{R}^d)$ of symmetric tensors of order $p$ on  $\mathbb{R}^d$ is naturally isomorphic to the dual of the space $\mathbb{H}_p[X_1,...,X_d]$ of homogeneous polynomials of degree $p$ on $\mathbb{R}^d$.  We set  ${\rm Sym}_0(\mathbb{R}^d):=\mathbb{R}.$

An important example of a symmetric $p$-tensor  on $\mathbb{R}^d$ is given by the $p$-th order derivative of  a smooth  function:
$$\forall f\in C^p(\mathbb{R}^d,\mathbb{R}), \forall x\in \mathbb{R}^d: \qquad \nabla^p f(x)\in {\rm Sym}_p(\mathbb{R}^d).$$ 
The symmetry property is obtained by repeated application of Schwarz's lemma.

We define $\mathbb{S}_p(\mathbb{R}^d)$ as the direct sum of ${\rm Sym}_k(\mathbb{R}^d)$ for $k = 0,1,2,...,p$:
$$ \mathbb{S}_p(\mathbb{R}^d) =\bigoplus _{k=0}^{p} {\rm Sym}_{k}(\mathbb{R}^d).
$$
The space $\mathbb{S}_p(\mathbb{R}^d)$  is naturally isomorphic to the dual of the space $\mathbb{R}_p[X_1,...,X_d]$ 
of polynomials of degree $\leq p$ in $d$ variables, which  defines a bilinear product
$$ \langle \cdot,\cdot \rangle \colon \mathbb{S}_p(\mathbb{R}^d)\times \mathbb{R}_p[X_1,...,X_d] \to \mathbb{R}.$$
Slightly abusing notation, we also write $\langle \cdot, \cdot \rangle$ for the canonical inner product on $T_p(\R^d)$. Consider now a continuous $\mathbb{R}^d$-valued path $S\in C([0,T],\mathbb{R}^d)$ and a sequence of partitions $\pi_n=\{t_0^n, \dots, t^n_{N(\pi_n)}\}$ with $t_0^n=0<...< t^n_k<...< t^n_{N(\pi_n)}=T$. Then 
\[
		\mu^n \assign \sum_{[t_j,t_{j+1}] \in \pi_n}  \delta (\cdot - t_j)  \underbrace{(S(t_{j + 1}) - S(t_j))\otimes  \dots \otimes(S(t_{j + 1}) - S(t_j))}_{p\ \rm times} 
	\]
	defines a tensor-valued measure on $[0,T]$ with values in ${\rm Sym}_p(\mathbb{R}^d)$. 
This space of measures is in duality with the space $C([0,T],\mathbb{H}_p[X_1,...,X_d] )$ of continuous functions taking values in homogeneous polynomials of degree $p$, i.e. homogeneous polynomials of degree $p$ with continuous time-dependent coefficients.

\begin{definition}[$p$-th variation of a multidimensional function]\label{def:vector} Let $p\in \mathbb{N}$ be even, let $S \in C([0,T],\R^d)$ be a  continuous path and let $\pi=(\pi_n)_{n\geq 1}$ be a sequence of
partitions of $[0,T]$.
Consider the sequence of tensor-valued measures
	\[
		\mu^n \assign \sum_{[t_j,t_{j+1}] \in \pi_n}  \delta (\cdot - t_j) (S(t_{j + 1}) - S(t_j))^{\otimes p}.
	\]
	We say that $S$ has a $p$-th variation along  $\pi=(\pi_n)_{n\geq 1}$ if $\op{osc}(S,\pi_n)\to 0$ and there exists a ${\rm Sym}_p(\mathbb{R}^d)$--valued measure $\mu_S$ without atoms such that for all $f\in C([0,T], \mathbb{H}_p[X_1,...,X_d])$
	\ba
		\lim_{n\to\infty} \int_{0}^T\langle f,\dd \mu_n \rangle = \lim_{n\to\infty} \sum_{[t_j,t_{j+1}] \in \pi_n} \langle f( t_j), (S(t_{j + 1}) - S(t_j))^{\otimes p} \rangle = \int_{0}^T\langle f,\dd \mu_S\rangle.
	\ea
	In that case we write $S \in V_p(\pi)$ and  we call $[S]^p\colon [0,T]\to  {\rm Sym}_p(\mathbb{R}^d) $ defined by $$[S]^p(t) := \mu([0,t])$$  the \emph{$p$-th variation} of $S$.
\end{definition}

By analogy with the positivity property of symmetric matrices, we say that a symmetric $p$-tensor $T \in {\rm Sym}_p(\mathbb{R}^d)$ is {\rm positive} if
\ba
	\langle T,v\otimes ...\otimes v \rangle \geq 0, \qquad \forall v\in\mathbb{R}^d.
\ea
We denote the set of positive symmetric $p$-tensors by ${\rm Sym}_p^+(\mathbb{R}^d)$.
For $T,\tilde T\in {\rm Sym}_p(\mathbb{R}^d)$ we write $T\geq \tilde T$ if $T-\tilde T\in {\rm Sym}^+_p(\mathbb{R}^d)$. This defines a partial order on ${\rm Sym}_p(\mathbb{R}^d)$.

\begin{property}\label{property:fv} Let $S \in V_p(\pi)\cap C([0,T],\mathbb{R}^d)$. Then 
\begin{enumerate}
\item[(i)] $[S]^p$ has finite variation and is increasing in the sense of the partial order on ${\rm Sym}_p(\mathbb{R}^d)$:
	\[
		[S]^p(t+h) - [S]^p(t) \in {\rm Sym}^+_p(\mathbb{R}^d), \qquad \forall \, 0\leq t\leq t+h\leq T.
	\]
$$(ii)\qquad\forall t\in [0,T],\qquad  \sum_{ \pi_n} (S(t_{j + 1}\wedge t) - S(t_j\wedge t))^{\otimes p} \quad\mathop{\to}^{n\to\infty} \quad[S]^p(t).\qquad$$
\end{enumerate}
\end{property}

\begin{proof}
	Let $v \in \R^d$. Before passing to the limit, the function
	\[
		\sum_{\substack{[t_j,t_{j+1}] \in \pi_n: \\ t_{j} \le t}} \langle v^{\otimes p}, (S(t_{j + 1}) - S(t_j))^{\otimes p} \rangle = \sum_{\substack{[t_j,t_{j+1}] \in \pi_n: \\ t_{j} \le t}} | v \cdot (S(t_{j + 1}) - S(t_j))|^{p} 
	\]
	is increasing in $t$, and therefore it defines a finite (positive) measure. By assumption, this measure converges weakly to the measure defined by $(a,b] \mapsto \int_0^T\langle \1_{(a,b]} v^{\otimes p}, \dd \mu_S\rangle$. In particular, we have
	\[
		\langle v^{\otimes p}, [S]^p(t+h) - [S]^p(t)\rangle = \int_{0}^T \langle \1_{(t,t+h]} v^{\otimes p}, \dd \mu_S\rangle \ge 0.
	\]
	Thus, $\langle v^{\otimes p}, [S]^p\rangle$ is increasing for all $v \in \R^d$, and from here it is easy to see that $[S]^p$ has finite variation (apply e.g. polarization to go from $v^{\otimes p}$ to $v_1 \otimes \dots \otimes v_p$).
\end{proof}

\begin{theorem}[Change of variable formula for paths with finite $p$-th variation]\label{thm:follmer-ito-multidim}
	Let $p \in \N$ be  even, let $(\pi_n)$ be a  sequence of partitions of $[0,T]$ and let $S \in V_p(\pi)\cap C([0,T],\mathbb{R}^d)$. Then for all $f \in C^p(\R^d,\R)$ the  limit of compensated Riemann sums
	\[
		\int_0^t \langle \nabla f(S(s)) , \dd S(s) \rangle \assign \lim_{n \rightarrow \infty} \sum_{[t_j, t_{j + 1}] \in \pi_n} \sum_{k=1}^{p-1} \frac{1}{k!} \langle \nabla^k f(S(t_j)), (S(t_{j + 1} \wedge t) - S(t_j \wedge t))^{\otimes k} \rangle
	\]
	exists for every $t\in [0,T]$ and satisfies the 
	pathwise change of variable formula:
	\[
		f(S(t)) - f(S(0)) = \int_0^t \langle \nabla f(S(s)) , \dd S(s) \rangle + \frac{1}{p!} \int_0^t \langle \nabla^p f(S(s)), \dd [S]^p(s) \rangle.
	\]
\end{theorem}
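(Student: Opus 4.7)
The plan is to mirror the proof of Theorem \ref{thm:follmer-ito}, replacing scalar products and derivatives by their tensor-valued analogues and invoking Definition \ref{def:vector} (together with Property \ref{property:fv}) in place of the one-dimensional $p$-th variation convergence.

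First, I start from the telescoping identity
\[
	f(S(t)) - f(S(0)) = \sum_{[t_j, t_{j+1}] \in \pi_n} \big( f(S(t_{j+1} \wedge t)) - f(S(t_j \wedge t)) \big)
\]
and apply Taylor's formula with integral remainder at order $p$ to each increment. Writing $\Delta_j S \assign S(t_{j+1} \wedge t) - S(t_j \wedge t)$, this splits the right-hand side into (i) the compensated Riemann sum $\sum_{[t_j,t_{j+1}]\in\pi_n} \sum_{k=1}^{p-1} \tfrac{1}{k!} \langle \nabla^k f(S(t_j)), (\Delta_j S)^{\otimes k}\rangle$; (ii) the leading $p$-th order term $\sum_{[t_j,t_{j+1}]\in\pi_n} \tfrac{1}{p!} \langle \nabla^p f(S(t_j)), (\Delta_j S)^{\otimes p}\rangle$; and (iii) a remainder involving $\langle \nabla^p f(S(t_j) + \lambda \Delta_j S) - \nabla^p f(S(t_j)), (\Delta_j S)^{\otimes p}\rangle$ integrated in $\lambda \in [0,1]$.

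Second, I dispose of (ii) using Definition \ref{def:vector}: the map $s \mapsto \nabla^p f(S(s))$ is continuous with values in $\op{Sym}_p(\R^d)$, and through the pairing with $v^{\otimes p}$ it is a continuous path in $\mathbb{H}_p[X_1,\dots,X_d]$, hence an admissible test function for the weak convergence $\mu^n \to \mu_S$. Because $\mu_S$ has no atoms, $t$ is a continuity point of $[S]^p$ and the restriction to $[0,t]$ passes to the limit, yielding
\[
	\sum_{[t_j,t_{j+1}] \in \pi_n} \frac{1}{p!} \langle \nabla^p f(S(t_j)), (\Delta_j S)^{\otimes p} \rangle \longrightarrow \frac{1}{p!} \int_0^t \langle \nabla^p f(S(s)), \dd [S]^p(s) \rangle.
\]

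Third, I show that the remainder (iii) vanishes. Since $S([0,T])$ is compact, I may assume without loss of generality that $f$ is compactly supported, so $\nabla^p f$ is uniformly continuous with some modulus of continuity $\omega_f$. The remainder is then bounded in absolute value by
\[
	\frac{\omega_f(\op{osc}(S,\pi_n))}{p!} \sum_{[t_j,t_{j+1}] \in \pi_n} |\Delta_j S|^p,
\]
and $\omega_f(\op{osc}(S,\pi_n)) \to 0$ by assumption. The sum $\sum_j |\Delta_j S|^p$ remains bounded uniformly in $n$: since $p$ is even, the power-mean inequality gives $|v|^p \le d^{p/2 - 1}\sum_{i=1}^d \langle e_i^{\otimes p}, v^{\otimes p}\rangle$, and applying this pointwise together with Property \ref{property:fv}(ii) yields $\sum_j |\Delta_j S|^p \le d^{p/2-1} \sum_i \langle e_i^{\otimes p}, [S]^p(t)\rangle + o(1)$, which is finite. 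Rearranging the telescoping identity then shows that the compensated Riemann sums (i) converge to $f(S(t)) - f(S(0)) - \tfrac{1}{p!}\int_0^t \langle \nabla^p f(S(s)), \dd[S]^p(s)\rangle$, which we take as the definition of $\int_0^t \langle \nabla f(S(s)), \dd S(s)\rangle$, delivering the change of variable formula.

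The only genuinely new ingredient compared with the scalar proof is step three: controlling the Euclidean norms $|\Delta_j S|^p$ by quantities natural to the tensor-valued measure $\mu_S$. This is the main obstacle, and it is resolved by the polynomial identity above that dominates $|v|^p$ by a finite linear combination of contractions $\langle e_i^{\otimes p}, v^{\otimes p}\rangle$, reducing uniform boundedness to Property \ref{property:fv}(ii).
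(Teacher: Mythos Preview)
Your proof is correct and follows the same approach as the paper: Taylor expansion of order $p$, convergence of the top-order term via the definition of $V_p(\pi)$, and vanishing of the remainder by uniform continuity of $\nabla^p f$ together with boundedness of the total $p$-th variation mass. The only difference is that the paper dismisses the remainder in one line (``as in the proof of Theorem~\ref{thm:follmer-ito}''), while you spell out explicitly why $\sum_j |\Delta_j S|^p$ stays bounded via the coordinate inequality $|v|^p \le d^{p/2-1}\sum_i v_i^p$ and Property~\ref{property:fv}(ii); this is a useful detail that the paper glosses over.
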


\begin{proof}
	The proof follows similar ideas to the case $p=2$. By applying a Taylor expansion at order $p$ to the increments of $f(S)$ along the partition,  we obtain
	\begin{align}\label{eq:ito-pr1-multidim}
  		f (S(t)) - f (S(0)) & = \sum_{[t_j, t_{j + 1}] \in \pi_n} (f (S(t_{j + 1}\wedge t)) - f (S(t_j \wedge t)))\\ \nonumber
  		& = \sum_{[t_j, t_{j + 1}] \in \pi_n} \sum_{k=1}^p \frac{1}{k!} \langle \nabla^{k}f (S(t_j)), (S(t_{j + 1}\wedge t) - S(t_j \wedge t))^{\otimes k} \rangle \\ \nonumber
  		&\quad + \sum_{[t_j, t_{j + 1}] \in \pi_n} \int_0^1 \dd \lambda \frac{(1 - \lambda)^{p - 1}}{(p - 1) !}  \\ \nonumber
  &\hspace{70pt}\times \big\langle \big(\nabla^{p} f(S(t_j) + \lambda (S(t_{j + 1} \wedge t) - S(t_j \wedge t))) - \nabla^{p} f (S(t_j))\big), \\ \nonumber
  &\hspace{120pt} (S(t_{j + 1} \wedge t) - S(t_j \wedge t))^{\otimes p} \big\rangle .
	\end{align}
	As in the proof of Theorem~\ref{thm:follmer-ito} we assume that $f$ is compactly supported and use this to show that the remainder on the right hand side vanishes as $n \rightarrow \infty$. 
	Since $S \in V_p(\pi)$ we know that
	\[
		\lim_{n\to \infty} \sum_{[t_j, t_{j + 1}] \in \pi_n} \frac{1}{p!} \langle \nabla^k f (S(t_j)), (S(t_{j + 1}\wedge t) - S(t_j \wedge t))^{\otimes p} \rangle = \frac{1}{p!} \int_0^t \langle \nabla^p f(S(s)), \dd [S]^p(s)\rangle,
	\]
	and therefore we obtain from~\eqref{eq:ito-pr1-multidim}
	\ba
		&\lim_{n \to \infty} \sum_{[t_j, t_{j + 1}] \in \pi_n} \sum_{k=1}^{p-1} \frac{1}{k!} \langle \nabla^{k}f (S(t_j)), (S(t_{j + 1}\wedge t) - S(t_j \wedge t))^{\otimes k} \rangle \\
		&\hspace{50pt} = f(S(t)) - f(S(0)) - \frac{1}{p!} \int_0^t f^{(p)}(S(s)) \dd [S]^p(s),
	\ea
	and we simply define $\int_0^t \langle \nabla f(S(s)), \dd S(s)\rangle$ as the limit on the left hand side.
\end{proof}

\subsection{Relation with rough path integration}\label{sec.multidimensional-rp}

To explain the link between F\"ollmer's pathwise It\^o integral and rough path integration~\cite{lyons1998}, Friz and Hairer \cite[Chapter~5.3]{FrizHairer}  introduced the notion of (second order) reduced rough paths: 
\begin{definition}\label{def:reduced-rp-2nd-order}
	Let $\alpha \in (1/3,1/2)$. We set $\Delta_T \assign \{(s,t): 0 \le s \le t \le T\}$. A \emph{reduced rough path} of regularity $\alpha$ is a pair $(X, \bb X)\colon \Delta_T \to \R^d \oplus \bb {\rm Sym}_2(\R^d)$, such that
	\begin{itemize}
		\item[(i)] there exists $C>0$ with
			\[
				|X_{s,t}| + \sqrt{|\bb X_{s,t}|} \le C |t-s|^\alpha, \qquad (s,t) \in \Delta_T;
			\]
		\item[(ii)] the \emph{reduced Chen relation} holds
			\[
				\bb X_{s,t} - \bb X_{s,u} - \bb X_{u,t} = {\rm Sym}(X_{s,u} \otimes X_{u,t}), \qquad (s,u), (u,t) \in \Delta_T,
			\]
			where ${\rm Sym}(\cdot)$ denotes the symmetric part.
	\end{itemize}
\end{definition}
Friz and Hairer \cite{FrizHairer}  also show that, for  any $S \in V_2(\pi)$ there is a canonical candidate for a reduced rough path. Indeed, the pair
\[
	X_{s,t} \assign S(t) - S(s),\qquad \bb X_{s,t} \assign \frac{1}{2} X_{s,t} \otimes X_{s,t} - \frac12 ([S]^2(t) - [S]^2(s))
\]
satisfies the reduced Chen relation. But in general we do not know anything about the H\"older regularity of $S \in V_2(\pi)$, because for any continuous path $S$ there exists a sequence of partitions $(\pi_n)$ with $S \in V_2(\pi)$ and $[S]^2 \equiv 0$, see~\cite{freedman2012}. If however we take the dyadic Lebesgue partition $(\pi_n)$ generated by $S$ as in Definition~\ref{def:dyadic-lebesgue} and if $S \in V_2(\pi)$, then it follows from~\cite[Lemme~1]{bruneau1979}\footnote{Note that for $\lambda>0$ the path $S$ has finite $q$-variation if and only if $\lambda^{-1} S$ has finite $q$-variation, and therefore we can assume that $\lambda = 1$ in~\cite[Lemme~1]{bruneau1979}.} that $S$ has finite $q$-variation for any $q>2$. So in that case every $S \in V_2(\pi)$ corresponds to a reduced rough path with $p$-variation regularity. 
Rather than adapting Definition~\ref{def:reduced-rp-2nd-order} from H\"older to $p$-variation regularity, we directly introduce a concept of higher-order reduced rough paths. For that purpose we first define the concept of {\it control function}:
\begin{definition}
	A \emph{control function} is a continuous map $c \colon \Delta_T \to \R_+$ such that $c(t,t)=0$ for all $t \in [0,T]$ and such that $c(s,u) + c(u,t) \le c(s,t)$ for all $0 \le s \le u \le t \le T$.
\end{definition}

A function $f \colon [0,T] \to \R^d$ has finite $p$-variation if and only if there exists a control function $c$ with $|f(t) - f(s)|^p \le c(s,t)$, and in that case $\| f \|_{\op{p-var}} \le c(0,T)^{1/p}$.

\begin{definition}\label{def:reduced-rp}
	Let $p \ge 1$. A \emph{reduced rough path} of finite $p$-variation is a tuple
	\[
		\bb X = (1,\bb X^1, \dots, \bb X^{\lfloor p \rfloor})\colon \Delta_T \longrightarrow \bb S_{\lfloor p\rfloor}(\R^d),
	\]
	such that
	\begin{itemize}
		\item[(i)]\label{def:reduced-rp-i} there exists a control function $c$ with
			\[
				\sum_{k=1}^{\lfloor p\rfloor} |\bb X^k_{s,t}|^{p/k} \le c(s,t), \qquad (s,t) \in \Delta_T;
			\]
		\item[(ii)] the \emph{reduced Chen relation} holds
			\[
				\bb X_{s,t} = {\rm Sym}(\bb X_{s,u} \otimes \bb X_{u,t}), \qquad (s,u), (u,t) \in \Delta_T,
			\]
			where the symmetric part of $T \in T_k(\R^d)$ is defined as
			\[
				{\rm Sym}(T) \assign \frac{1}{k!} \sum_{\sigma \in \mathfrak{S}_k} \sigma T,\qquad \sigma T(v_1, \dots, v_k) \assign T(v_{\sigma1},\dots, v_{\sigma k}),
			\]
			with the group of permutations $\mathfrak S_k$ of $\{1, \dots, k\}$.
	\end{itemize}
\end{definition}

\begin{lemma}\label{lem:reduced-rp-from-Vp}
	Let $S \in C([0,T],\R^d)$ and let $(\pi_n)$ be the dyadic Lebesgue partition generated by $S$. Let $p \ge 1$ and assume that $S \in V_p(\pi)$. Then for any $q>p$ with $\lfloor q \rfloor = \lfloor p \rfloor$ we obtain a reduced rough path of finite $q$-variation by setting $\bb X^0_{s,t} \assign 1$,
	\begin{gather*}
		\bb X^k_{s,t} \assign \frac{1}{k!} (S(t) - S(s))^{\otimes k},\qquad k=1,\dots, \lfloor p \rfloor - 1,\\
		\bb X^{\lfloor p \rfloor}_{s,t} \assign \frac{1}{\lfloor p \rfloor!} (S(t) - S(s))^{\otimes \lfloor p \rfloor} - \frac{1}{\lfloor p \rfloor!} ([S]^p(t) - [S]^p(s)).
	\end{gather*}
\end{lemma}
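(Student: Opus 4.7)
My plan is to verify the two conditions of Definition~\ref{def:reduced-rp} separately, handling the algebraic reduced Chen relation first and then the analytic control-function bound. For the Chen relation I set $m := \lfloor p \rfloor$, $a := S(u) - S(s)$ and $b := S(t) - S(u)$, so that $a+b = S(t) - S(s)$. The key algebraic fact is the ``symmetric binomial identity''
\[
	\frac{(a+b)^{\otimes k}}{k!} = \sum_{j=0}^k {\rm Sym}\Big(\frac{a^{\otimes (k-j)}}{(k-j)!}\otimes \frac{b^{\otimes j}}{j!}\Big),
\]
obtained by grouping the $2^k$ summands of $(a+b)^{\otimes k}$ by the number of $b$-factors and noting that ${\rm Sym}(a^{\otimes(k-j)}\otimes b^{\otimes j})$ equals $\binom{k}{j}^{-1}$ times the sum of all $\binom{k}{j}$ orderings with $j$ copies of $b$. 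For $k<m$ this is precisely the Chen relation. For $k=m$ the same identity produces $(S(t)-S(s))^{\otimes m}/m!$, while the telescoping $([S]^p(u)-[S]^p(s)) + ([S]^p(t)-[S]^p(u)) = [S]^p(t)-[S]^p(s)$ applied to the $j=0$ and $j=m$ endpoint terms delivers the $[S]^p$-correction of $\bb X^m_{s,t}$; both $\bb X^m_{s,u}$ and $\bb X^m_{u,t}$ are already symmetric, so the operator ${\rm Sym}$ acts trivially on them.

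For the control-function bound, the crucial input is a $p$-th order analogue of~\cite[Lemme~1]{bruneau1979}: on the dyadic Lebesgue partition generated by $S$, the assumption $S\in V_p(\pi)$ upgrades to finite $q$-variation of $S$ for every $q>p$. Granting this, there is a control function $c_1$ with $|S(t)-S(s)|^q \le c_1(s,t)$, so that for every $k \in \{1,\dots,m-1\}$,
\[
	|\bb X^k_{s,t}|^{q/k} = \frac{|S(t)-S(s)|^q}{(k!)^{q/k}} \lesssim c_1(s,t).
\]
For $k=m$, applying $(x+y)^{q/m} \lesssim x^{q/m}+y^{q/m}$ to the defining expression of $\bb X^m_{s,t}$ yields
\[
	|\bb X^m_{s,t}|^{q/m} \lesssim |S(t)-S(s)|^q + |[S]^p(t)-[S]^p(s)|^{q/m}.
\]
By Property~\ref{property:fv}(i) the $p$-th variation $[S]^p$ has finite (scalar) variation, so $c_2(s,t) := V_{[S]^p}(s,t)^{q/m}$ is a control function (it is continuous, vanishes on the diagonal, and is super-additive because $q/m>1$ and $V_{[S]^p}$ is additive). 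Setting $c := C(c_1+c_2)$ for a suitable constant dominates $\sum_{k=1}^{m}|\bb X^k_{s,t}|^{q/k}$ as required.

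The symmetric-algebra identity and the subsequent variation estimates are essentially elementary. The genuine obstacle is the Bruneau-type upgrade, which must be extended from $p=2$ to arbitrary $p$. This extension exploits the specific structure of the dyadic Lebesgue partition (stopping times generated by level crossings of $S$) together with the hypothesis $S\in V_p(\pi)$ to bound the $q$-variation of $S$ for any $q>p$; once it is in hand, the proof reduces to the calculations sketched above.
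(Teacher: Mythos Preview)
Your proposal is correct and follows essentially the same route as the paper: the symmetric binomial identity you use for the Chen relation is exactly the shuffle computation the paper performs, and your control-function construction via $c_1$ (from the $q$-variation of $S$) plus $c_2$ (from the finite variation of $[S]^p$, Property~\ref{property:fv}) is identical to the paper's $\tilde c + \tilde{\tilde c}$. One remark: you flag the Bruneau-type step as a ``genuine obstacle'' requiring extension from $p=2$ to general $p$, but \cite[Lemme~1]{bruneau1979} is already stated for arbitrary exponents and the paper simply invokes it directly; the preceding discussion in the paper only spells out the case $p=2$, which may have misled you, but no new work is needed there.
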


\begin{proof}
	Let $q>p$. As discussed above we know that $S$ has finite $q$-variation, so let us start by setting
	\[
		\tilde{c}(s,t) \assign \|S \|_{\op{q-var},[s,t]}^q \assign \sup_{\pi\in \Pi([s,t])} \sum_{[t_j, t_{j + 1}] \in \pi} |S(t_{j + 1}) - S(t_j)|^q,\qquad (s,t) \in \Delta_T,
	\]
	which is a control function such that
	\[
		\sum_{k=1}^{\lfloor p\rfloor} |\bb X^k_{s,t}|^{q/k} \le C_{d,p} \big( \tilde{c}(s,t) + |[S]^p(t) - [S]^p(s)|^{q/\lfloor p \rfloor} \big),
	\]
	with a constant $C_{d,p} >0$ that only depends on the dimension $d$ and on $p$. By Property~\ref{property:fv} the path $[S]^p$ has finite variation and therefore it also has finite $q/\lfloor p \rfloor$-variation, so
	\[
		\tilde{\tilde{c}}(s,t) \assign \| [S]^p \|_{\op{q/\lfloor p \rfloor-var},[s,t]}^{q/\lfloor p \rfloor}
	\]
	defines another control function. Therefore, $c(s,t) \assign C_{d,p} ( \tilde{c}(s,t) + \tilde{\tilde{c}}(s,t))$ is a control function for which the analytic property (i) in Definition~\ref{def:reduced-rp-i} holds.
	
	To show the reduced Chen relation let us write $\mathfrak{S}_{\ell, k}$ for $0 \le \ell, k$ for the \emph{shuffles} of words of length $\ell,k$, i.e. for those permutations $\sigma \in \mathfrak{S}_{\ell + k}$ which satisfy $\sigma i < \sigma j$ for all $1 \le i < j \le \ell$ respectively $\ell+1 \le i < j \le k$. Note that there are $\binom{\ell+k}{\ell}$ shuffles in $\mathfrak{S}_{\ell,k}$. We have for $k < \lfloor p \rfloor$
	\ba
		\bb X_{s,t}^k & = \frac{1}{k!} (S(t) - S(s))^{\otimes k} = \frac{1}{k!} (S(t) - S(u) + S(u) - S(s))^{\otimes k} \\
		& = \frac{1}{k!} \sum_{\ell = 0}^k \sum_{\sigma \in \mathfrak{S}_{\ell, k-\ell}} \sigma\big((S(u) - S(s))^{\otimes \ell} \otimes (S(t) - S(u))^{\otimes (k-\ell)}\big),
	\ea
	where we set $v^{\otimes 0} \assign 1$ for all $v \in R^d$. On the other hand, if $\mathcal{P}_k$ denotes the projection onto $T_k(\R^d)$, then for $k < \lfloor p \rfloor$
	\ba
		\mc P_k ({\rm Sym}(\bb X_{s,u} \otimes \bb X_{u,t})) & = \sum_{\ell=0}^k {\rm Sym}( \bb X^\ell_{s,u} \otimes \bb X^{k-\ell}_{u,t}) \\
		& = \sum_{\ell=0}^k \frac{1}{\ell! (k-\ell)!} {\rm Sym}\big((S(u) - S(s))^{\otimes \ell} \otimes (S(t) - S(u))^{\otimes(k-\ell)}\big) \\
		& = \sum_{\ell=0}^k \frac{1}{\ell! (k-\ell)!} \binom{k}{\ell}^{-1} \sum_{\sigma \in \mathfrak{S}_{\ell, k - \ell}} \sigma((S(u) - S(s))^{\otimes \ell} \otimes (S(t) - S(u))^{\otimes(k-\ell)}\big) \\
		& = \bb X^k_{s,t},
	\ea
	which proves the reduced Chen relation for $k < \lfloor p \rfloor$. For $k = \lfloor p \rfloor$ we get the same relation by noting that $[S]^p$ is already symmetric and therefore ${\rm Sym}([S]^p(t) - [S]^p(s)) = [S]^p(t) - [S]^p(s)$.
\end{proof}

The following space of (higher order) controlled paths in the sense of Gubinelli~\cite{gubinelli2004} is defined for example in~\cite[Chapter~4.5]{FrizHairer}. We adapt the definition to paths that are controlled in the $p$-variation sense by a reduced rough path. If $\ell < k$ and $T \in T_\ell$, $\tilde T \in T_k$, then we interpret
	\[
		\langle T, \tilde T \rangle \in T_{k - \ell}, \qquad \langle T, \tilde T \rangle (v_1, \dots, v_{k-\ell}) \assign \langle T \otimes (v_1 \otimes \dots \otimes v_{k - \ell}), \tilde T \rangle,
	\]
	and similarly for $\langle \tilde T, T\rangle$.

\begin{definition}\label{def:controlled}
	Let $p \ge 1$ and let $\bb X$ be a reduced rough path of finite $p$-variation. A path
	\[
		Y = (Y^0, Y^1, \dots,  Y^{\lfloor p \rfloor}) \in C([0,T],\bb S_{\lfloor p\rfloor}(\R^d))
	\]
	is  \emph{controlled} by $\bb X$ if there exists a control function $c$ such that
	\[
		 \sum_{\ell = 1}^{\lfloor p \rfloor} \Big| Y^\ell(t) - \sum_{k=\ell}^{\lfloor p \rfloor} \langle Y^k(s), \bb X^{k-\ell}_{s,t} \rangle \Big|^{\frac{p}{\lfloor p \rfloor - \ell + 1}} \le c(s,t),\qquad (s,t) \in \Delta_T.
	\]
	In that case we write $Y \in \mc D^{\lfloor p \rfloor /p}_{\bb X}([0,T])$.
\end{definition}

\begin{example}\label{ex:controlled-f}
	Let $p \ge 1$, let $S$, $\bb X$ and $q$ be as in Lemma~\ref{lem:reduced-rp-from-Vp}, and let $f \in C^{\lfloor q\rfloor}(\R^d, \R)$. Then $Y^0 \assign 1$,
	\[
		Y^k(s) \assign \nabla^{k} f(S(s)),\qquad k=1,\dots, \lfloor q \rfloor
	\]
	defines a controlled path in $\mc D^{\lfloor q \rfloor /q}_{\bb X}([0,T])$. Indeed, as we discussed above $\nabla^k f(S(s)) \in {\rm Sym}_k(\R^d)$ for all $k = 1,\dots, \lfloor q \rfloor$, and  by Taylor's formula we have for $\ell \in \{1, \dots, \lfloor q \rfloor\}$
	\ba
		Y^\ell(t) = \nabla^\ell f(S(t)) & = \sum_{k=\ell}^{\lfloor q \rfloor} \frac{1}{(k-\ell)!} \langle \nabla^{k} f(S(s)), (S(t) - S(s))^{\otimes (k-\ell)} \rangle + O(c(s,t)^{(\lfloor q \rfloor - \ell + 1)/q}) \\
		& = \sum_{k=\ell}^{\lfloor q \rfloor} \langle Y^k(s), \bb X_{s,t}^{k-\ell} \rangle + O(c(s,t)^{(\lfloor q \rfloor - \ell + 1)/q}).
	\ea
\end{example}

\begin{proposition}\label{prop:rp-int}
	Let $p \ge 1$, let $\bb X$ be a reduced rough path of finite $p$-variation and let $Y \in \mc D^{\lfloor p \rfloor /p}_{\bb X}([0,T])$. Then the rough path integral
	\[
		I_{\bb X}(Y)(t)=\int_0^t \langle Y(s), \dd \bb X(s)\rangle = \lim_{\substack{\pi \in \Pi([0,t])\\ |\pi| \to 0}} \sum_{[t_j,t_{j+1}] \in \pi} \sum_{k=1}^{\lfloor p \rfloor} \langle Y^k(t_j), \bb X^k_{t_j,t_{j+1}}\rangle,\qquad t \in [0,T],
	\]
	defines a function in $C([0,T],\R)$, and it is the unique function with $I_{\bb X}(Y)(0)=0$ for which there exists a control function $c$ with
	\[
		\Big|\int_s^t \langle Y(r), \dd \bb X(r)\rangle - \sum_{k=1}^{\lfloor p \rfloor} \langle Y^k(s), \bb X^k_{s,t}\rangle\Big| \lesssim c(s,t)^{\frac{\lfloor p \rfloor+1}{p}},\qquad (s,t) \in \Delta_T.
	\]
\end{proposition}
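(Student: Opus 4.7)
The plan is to apply the sewing lemma in its $p$-variation formulation (see e.g. \cite[Chapter~4]{FrizHairer}) to the two-parameter germ
\[
	\Xi_{s,t} \assign \sum_{k=1}^{\lfloor p \rfloor} \langle Y^k(s), \bb X^k_{s,t}\rangle,\qquad (s,t) \in \Delta_T.
\]
The Riemann sums in the statement are exactly $\sum_{[t_j,t_{j+1}]\in\pi} \Xi_{t_j,t_{j+1}}$, so both the existence of the limit defining $I_{\bb X}(Y)(t)$ and the local bound that uniquely characterizes it will follow from the sewing lemma as soon as I show that the three-parameter increment $\delta \Xi_{s,u,t} \assign \Xi_{s,t} - \Xi_{s,u} - \Xi_{u,t}$ is dominated by $c(s,t)^{(\lfloor p\rfloor+1)/p}$ for some control function $c$. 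Note that $(\lfloor p\rfloor+1)/p > 1$ always, so the resulting exponent is supercritical in the sewing sense.

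The crucial step is an explicit algebraic identification of $\delta \Xi_{s,u,t}$. On one hand, the reduced Chen relation combined with the symmetry of $Y^k(s)\in{\rm Sym}_k(\R^d)$ gives
\[
	\langle Y^k(s), \bb X^k_{s,t}\rangle - \langle Y^k(s), \bb X^k_{s,u}\rangle = \langle Y^k(s), \bb X^k_{u,t}\rangle + \sum_{\ell=1}^{k-1} \langle Y^k(s), \bb X^\ell_{s,u}\otimes \bb X^{k-\ell}_{u,t}\rangle,
\]
because pairing a symmetric tensor with a tensor product absorbs the symmetrization that appears in Chen. On the other hand, Definition~\ref{def:controlled} provides
\[
	Y^k(s) - Y^k(u) = -\sum_{j=k+1}^{\lfloor p\rfloor} \langle Y^j(s), \bb X^{j-k}_{s,u}\rangle - R^k(s,u),\qquad |R^k(s,u)| \le c(s,u)^{(\lfloor p\rfloor - k + 1)/p}.
\]
Substituting the second identity into $\langle Y^k(s) - Y^k(u), \bb X^k_{u,t}\rangle$, summing over $k$, and reindexing $(k,j)\mapsto(j, j-k)$, I expect the cross terms $\langle Y^j(s), \bb X^{j-k}_{s,u}\otimes \bb X^k_{u,t}\rangle$ arising from the controlled-path expansion to cancel exactly the shuffle sum $\sum_{k,\ell} \langle Y^k(s), \bb X^\ell_{s,u}\otimes \bb X^{k-\ell}_{u,t}\rangle$ coming from Chen's relation, leaving the clean remainder identity
\[
	\delta \Xi_{s,u,t} = -\sum_{k=1}^{\lfloor p\rfloor} \langle R^k(s,u), \bb X^k_{u,t}\rangle.
\]
The bound $|\bb X^k_{u,t}|\le c(u,t)^{k/p}$ from Definition~\ref{def:reduced-rp}(i) together with superadditivity $c(s,u)^a c(u,t)^b \le c(s,t)^{a+b}$ then yields $|\delta \Xi_{s,u,t}| \lesssim c(s,t)^{(\lfloor p\rfloor + 1)/p}$.

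Once this bound is in place, the $p$-variation sewing lemma produces a unique continuous function $I_{\bb X}(Y)\colon [0,T]\to\R$ with $I_{\bb X}(Y)(0)=0$ and $|I_{\bb X}(Y)(t)-I_{\bb X}(Y)(s)-\Xi_{s,t}|\lesssim c(s,t)^{(\lfloor p\rfloor+1)/p}$, and identifies its increments as the limits of $\sum\Xi_{t_j,t_{j+1}}$ along any partition of $[s,t]$ with vanishing mesh; uniqueness under the local bound is part of the sewing statement. The main obstacle will be executing the algebraic cancellation cleanly, in particular handling the iterated contractions $\langle\langle Y^j(s),\bb X^{j-k}_{s,u}\rangle,\bb X^k_{u,t}\rangle$ and verifying that, thanks to symmetry of $Y^j(s)$, they coincide with $\langle Y^j(s),\bb X^{j-k}_{s,u}\otimes\bb X^k_{u,t}\rangle$; the remainder of the argument is a routine application of the sewing lemma.
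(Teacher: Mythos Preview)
Your proposal is correct and follows essentially the same route as the paper: both reduce the construction to bounding the three-parameter increment $\delta\Xi_{s,u,t}$ of the germ $\Xi_{s,t}=\sum_{k=1}^{\lfloor p\rfloor}\langle Y^k(s),\bb X^k_{s,t}\rangle$ by $c(s,t)^{(\lfloor p\rfloor+1)/p}$ and then invoke the classical sewing/rough-path argument. The only cosmetic difference is the order of the algebra: the paper expands $Y^k(u)$ via the controlled-path relation and applies the reduced Chen relation at the end, whereas you expand $\bb X^k_{s,t}$ via Chen first and then insert the expansion of $Y^k(s)-Y^k(u)$; the cross terms cancel in the same way in both computations, leaving exactly $\delta\Xi_{s,u,t}=-\sum_k\langle R^k(s,u),\bb X^k_{u,t}\rangle$.
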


\begin{proof}
	This follows from  classical arguments (Theorem~4.3 in~\cite{Lyons2007}, see also~\cite{gubinelli2004}) once we show that for $0 \le s \le u \le t \le T$
	\[
		\sum_{k=1}^{\lfloor p \rfloor} \langle Y^k(s), \bb X^k_{s,t}\rangle - \sum_{k=1}^{\lfloor p \rfloor} \langle Y^k(s), \bb X^k_{s,u}\rangle - \sum_{k=1}^{\lfloor p \rfloor} \langle Y^k(u), \bb X^k_{u,t}\rangle = O(c(s,t)^{\frac{\lfloor p \rfloor+1}{p}}),
	\]
	where $c$ is a control function such that the estimates in Definition~\ref{def:reduced-rp} and in Definition~\ref{def:controlled} hold. But
	\ba
		\sum_{k=1}^{\lfloor p \rfloor} \langle Y^k(u), \bb X^k_{u,t}\rangle & = \sum_{k=1}^{\lfloor p \rfloor} \Big(\sum_{\ell=k}^{\lfloor p \rfloor} \langle Y^\ell(s), \bb X^{\ell-k}_{s,u} \otimes \bb X^k_{u,t}\rangle + O\big(c(s,u)^{\frac{\lfloor p \rfloor - k + 1}{p}} c(u,t)^{\frac{k}{p}}\big) \Big) \\
		& = \sum_{k=1}^{\lfloor p \rfloor} \sum_{\ell=1}^k \langle Y^k(s), \bb X^{k-\ell}_{s,u} \otimes \bb X^\ell_{u,t}\rangle + O\big(c(s,t)^{\frac{\lfloor p\rfloor +1}{p}}\big) \\
		& = \sum_{k=1}^{\lfloor p \rfloor} \langle Y^k(s), \mathcal{P}_k({\rm Sym}(\bb X_{s,u} \otimes \bb X_{u,t})) - \bb X^k_{s,u}\rangle + O\big(c(s,t)^{\frac{\lfloor p\rfloor +1}{p}}\big),
	\ea
	where in the last step we used that $Y^k(s)$ is symmetric. Therefore, the reduced Chen relation gives
	\ba
		&\sum_{k=1}^{\lfloor p \rfloor} \langle Y^k(s), \bb X^k_{s,t}\rangle - \sum_{k=1}^{\lfloor p \rfloor} \langle Y^k(s), \bb X^k_{s,u}\rangle - \sum_{k=1}^{\lfloor p \rfloor} \langle Y^k(u), \bb X^k_{u,t}\rangle \\
		&\hspace{50pt} = \sum_{k=1}^{\lfloor p \rfloor} \langle Y^k(s), \bb X^k_{s,t} - \bb X^k_{s,u} - \mathcal{P}_k({\rm Sym}(\bb X_{s,u} \otimes \bb X_{u,t})) + \bb X^k_{s,u}\rangle + O\big(c(s,t)^{\frac{\lfloor p\rfloor +1}{p}}\big) \\
		&\hspace{50pt} = \sum_{k=1}^{\lfloor p \rfloor} \langle Y^k(s), \bb X^k_{s,t} - \bb X^k_{s,t}\rangle + O\big(c(s,t)^{\frac{\lfloor p\rfloor +1}{p}}\big) = O\big(c(s,t)^{\frac{\lfloor p\rfloor +1}{p}}\big),
	\ea
	which concludes the proof.
\end{proof}

\begin{corollary}
	Let $p\in \N$ be an even integer and let $q, S,\bb X, f$ be as in Example~\ref{ex:controlled-f}. Then
	\[
		\int_0^t \langle\nabla f(S(s)), \dd \bb X(s)\rangle = \int_0^t \langle \nabla f(S(s)), \dd S(s) \rangle,\qquad t \in [0,T],
	\]
	where the left hand side denotes the rough path integral of Proposition~\ref{prop:rp-int} and the right hand side is the integral of Theorem~\ref{thm:follmer-ito-multidim}.
\end{corollary}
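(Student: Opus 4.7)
By Example~\ref{ex:controlled-f}, setting $Y^k(s) \assign \nabla^k f(S(s))$ for $k=1,\ldots,p$ gives a controlled path $Y \in \mc D^{\lfloor q\rfloor /q}_{\bb X}([0,T])$, so Proposition~\ref{prop:rp-int} applies and the rough-path integral $\int_0^t \langle \nabla f(S(s)),\dd \bb X(s)\rangle$ is the limit of its Riemann sums along any partition sequence of $[0,t]$ whose mesh goes to zero. The strategy is to evaluate these Riemann sums along the partitions $\pi_n' \assign (\pi_n \cap [0,t]) \cup \{t\}$, whose mesh is at most $|\pi_n|$ and hence vanishes under the hypothesis that the dyadic Lebesgue partition refines; then identify this sum with the compensated Riemann sum of Theorem~\ref{thm:follmer-ito-multidim} plus a correction coming from $\bb X^p$, and show that the correction cancels in the limit.

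Inserting the explicit formulas from Lemma~\ref{lem:reduced-rp-from-Vp} -- for $k<p$ one has $\bb X^k_{s,t} = (S(t)-S(s))^{\otimes k}/k!$, while $\bb X^p_{s,t}$ is the same expression minus $([S]^p(t)-[S]^p(s))/p!$ -- the rough-path Riemann sum along $\pi_n'$ splits into three pieces:
\ba
	&\sum_{[t_j,t_{j+1}]\in\pi_n'}\sum_{k=1}^{p-1}\frac{1}{k!}\langle \nabla^k f(S(t_j)), (S(t_{j+1})-S(t_j))^{\otimes k}\rangle \\
	&\quad+\frac{1}{p!}\sum_{[t_j,t_{j+1}]\in\pi_n'}\langle \nabla^p f(S(t_j)), (S(t_{j+1})-S(t_j))^{\otimes p}\rangle \\
	&\quad-\frac{1}{p!}\sum_{[t_j,t_{j+1}]\in\pi_n'}\langle \nabla^p f(S(t_j)), [S]^p(t_{j+1})-[S]^p(t_j)\rangle.
\ea
The first piece is exactly the compensated Riemann sum defining $\int_0^t\langle \nabla f(S(r)),\dd S(r)\rangle$ in Theorem~\ref{thm:follmer-ito-multidim}, and therefore converges to that integral. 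The second converges to $\tfrac{1}{p!}\int_0^t \langle \nabla^p f(S(r)),\dd [S]^p(r)\rangle$ by the defining property of the $p$-th variation (as already used in the proof of Theorem~\ref{thm:follmer-ito-multidim}), and the third converges to the same limit by a standard Riemann--Stieltjes argument using continuity of $\nabla^p f(S)$ and the finite variation of $[S]^p$ established in Property~\ref{property:fv}. The second and third limits cancel, yielding the stated equality.

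No serious obstacle is expected; the entire argument is a bookkeeping identification of the rough-path Riemann sum with F\"ollmer's compensated Riemann sum plus a vanishing $\bb X^p$-correction. The only technical points requiring some care are (i) verifying that $|\pi_n'|\to 0$ so that Proposition~\ref{prop:rp-int} can be evaluated along $(\pi_n')$, which follows from the standing hypothesis on the dyadic Lebesgue partition, and (ii) handling the boundary interval terminating at $t$ in each of the three sums, which is routine because $\op{osc}(S,\pi_n)\to 0$ and all integrands are bounded and continuous.
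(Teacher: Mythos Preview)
Your proof is correct and essentially the same as the paper's. Both substitute the explicit formulas for $\bb X^k$ from Lemma~\ref{lem:reduced-rp-from-Vp} into the rough-path Riemann sum and separate out the $[S]^p$ correction as a Riemann--Stieltjes integral; the paper then identifies the remaining sum $\sum_{k=1}^{p}\tfrac{1}{k!}\langle\nabla^k f(S(t_j)),(\Delta S_j)^{\otimes k}\rangle$ with $f(S(t))-f(S(0))$ via a Taylor expansion as in Theorem~\ref{thm:follmer-ito-multidim}, while you equivalently split off the $k=p$ term (handled by the $V_p(\pi)$ property) and match the $k\le p-1$ sum directly with the compensated Riemann sum defining the F\"ollmer integral.
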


\begin{proof}
	It suffices to show that
	\[
		\int_0^t \langle\nabla f(S(s)), \dd \bb X(s)\rangle = f(S(t)) - f(S(0)) - \frac{1}{p!} \int_0^t \langle \nabla^p f(S(s)), \dd [S]^p(s) \rangle,
	\]
	and since
	\[
		\lim_{\substack{\pi \in \Pi([0,t])\\ |\pi| \to 0}} \sum_{[t_j,t_{j+1}] \in \pi} \langle \nabla^p f(S(t_j)),  [S]^p(t_{j+1}) - [S]^p(t_j)\rangle = \int_0^t \langle \nabla^p f(S(s)), \dd [S]^p(s) \rangle,
	\]
	this is equivalent to
	\[
		\lim_{\substack{\pi \in \Pi([0,t])\\ |\pi| \to 0}} \sum_{[t_j,t_{j+1}] \in \pi} \sum_{k=1}^{ p } \langle \nabla^k f(S(t_j)), \frac{1}{k!}(S(t_{j+1}) - S(t_j))^{\otimes k} \rangle = f(S(t)) - f(S(0)).
	\]
	The last identity can be shown by writing $f(S(t)) - f(S(0))$ as a telescoping sum and by performing a Taylor expansion up to order $p$ and controlling the remainder term as in the proof of Theorem~\ref{thm:follmer-ito-multidim}.
\end{proof}


\newpage
\appendix

\section*{Appendix: $p$-th variation for odd integer values of $p$}\label{app:odd}

\begin{lemma}
  Let $p > 1$ be an odd integer and let $\pi_n$ be the dyadic Lebesgue partition generated by $S \in C([0,T],\R)$. Assume that
  $\nu^n \assign \sum_{[t_j, t_{j + 1}] \in \pi_n} \delta (\cdot - t_j)
  | S (t_{j + 1}) - S (t_j) |^p$ converges weakly to a signed
  measure $\nu$ without atoms. Then we have for all $f \in C(\R,\R)$
  \[
  	\lim_{n \rightarrow \infty} \sum_{[t_j, t_{j + 1}] \in \pi_n} f (S (t_j)) (S (t_{j + 1} \wedge t) - S (t_j \wedge t))^p = 0, \qquad t \in [0,T].
  \]
\end{lemma}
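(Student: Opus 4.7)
The plan is to exploit the defining property of the dyadic Lebesgue partition generated by $S$: every increment has constant modulus $|S(t_{j+1}) - S(t_j)| = 2^{-n}$ for $[t_j, t_{j+1}] \in \pi_n$ with $t_{j+1} \le T$. Since $p$ is odd, $a^p = |a|^{p-1} a$, so we may factor
\[
(S(t_{j+1}) - S(t_j))^p = 2^{-n(p-1)} (S(t_{j+1}) - S(t_j)).
\]
The single boundary interval with $t_j < t < t_{j+1}$ contributes at most $\|f\|_\infty \cdot 2^{-np} \to 0$, so it suffices to prove
\[
2^{-n(p-1)} \sum_{[t_j, t_{j+1}] \in \pi_n,\, t_{j+1} \le t} f(S(t_j)) \bigl(S(t_{j+1}) - S(t_j)\bigr) \xrightarrow{n \to \infty} 0.
\]

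I would first treat the case $f = P$, a polynomial, by introducing an antiderivative $G$ (also a polynomial, hence $C^\infty$). Taylor's formula gives
\[
G(S(t_{j+1})) - G(S(t_j)) = P(S(t_j))(S(t_{j+1}) - S(t_j)) + \tfrac{1}{2} G''(\xi_j^n) (S(t_{j+1}) - S(t_j))^2,
\]
and telescoping the left-hand side over $t_{j+1} \le t$ yields
\[
\sum_{t_{j+1} \le t} P(S(t_j))(S(t_{j+1}) - S(t_j)) = G(S(t^*)) - G(S(0)) - \tfrac{1}{2} \sum_{t_{j+1} \le t} G''(\xi_j^n) (S(t_{j+1}) - S(t_j))^2,
\]
with $t^*$ the largest partition point at most $t$. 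The first term on the right is $O(1)$. Writing $N_{\le t}(\pi_n)$ for the number of intervals with $t_{j+1} \le t$, weak convergence of $\nu^n$ to the finite atomless $\nu$ forces $N_{\le t}(\pi_n) \cdot 2^{-np} = \nu^n([0, t^*]) + O(2^{-np}) = O(1)$; as $G''$ is bounded on the compact range of $S$, the remainder sum is $O(N_{\le t}(\pi_n) \cdot 2^{-2n}) = O(2^{n(p-2)})$. Multiplying by the prefactor $2^{-n(p-1)}$ produces a bound of order $O(2^{-n})$, which vanishes (using $p \ge 3$).

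For a general $f \in C(\R,\R)$ I would use the Stone--Weierstrass theorem on the compact set $S([0,T])$: given $\varepsilon > 0$, pick a polynomial $P$ with $\sup_{x \in S([0,T])} |f(x) - P(x)| \le \varepsilon$. The error is controlled by
\[
\Bigl| 2^{-n(p-1)} \sum_{t_{j+1} \le t} (f - P)(S(t_j))(S(t_{j+1}) - S(t_j)) \Bigr| \le \varepsilon \cdot 2^{-np} \cdot N_{\le t}(\pi_n) = \varepsilon \cdot \nu^n([0,t^*]) \to \varepsilon \cdot \nu([0,t]),
\]
so combining with the polynomial case gives $\limsup_n |\cdots| \le \varepsilon \cdot \nu([0,t])$; letting $\varepsilon \downarrow 0$ finishes the argument.

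The main obstacle to overcome is that the naive triangle-inequality bound on the reduced sum is $\|f\|_\infty \cdot N_{\le t}(\pi_n) \cdot 2^{-n} = O(2^{n(p-1)})$, which matches precisely the prefactor $2^{n(p-1)}$ we must beat; so genuine cancellation among the signed increments has to be extracted. The Taylor--telescoping identity supplies this cancellation quantitatively, and only requires enough regularity to take an antiderivative --- whence the detour through polynomials and uniform approximation to reach arbitrary continuous $f$.
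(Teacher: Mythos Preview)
Your proof is correct, and it takes a genuinely different route from the paper's.

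The paper rewrites the sum spatially in terms of up- and down-crossings of the dyadic intervals $I^n_k = [k2^{-n},(k+1)2^{-n}]$: an upcrossing contributes $f(k2^{-n})2^{-np}$ and a downcrossing contributes $-f((k+1)2^{-n})2^{-np}$. Since $|U_t(I^n_k)-D_t(I^n_k)|\le 1$, the ``matched'' part is bounded by $\sum_k 2^{-np}|f(k2^{-n})| = O(2^{-n(p-1)})$, while the mismatch from evaluating $f$ at different endpoints is controlled by $\omega_f(2^{-n})\cdot \nu^n([0,t])\to 0$. This works directly for any continuous $f$ via its modulus of continuity, with no approximation step.

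Your argument instead exploits the oddness of $p$ to factor $(S(t_{j+1})-S(t_j))^p = 2^{-n(p-1)}(S(t_{j+1})-S(t_j))$, reducing the problem to bounding a left Riemann sum for $\int f(S)\,\dd S$. You then extract cancellation analytically: for polynomials, the antiderivative trick telescopes the sum to $O(1)+O(N_{\le t}(\pi_n)2^{-2n})$, and both pieces are killed by the prefactor $2^{-n(p-1)}$. Stone--Weierstrass transfers this to general $f$. What you gain is a more ``calculus-flavoured'' argument that avoids crossing-number combinatorics entirely; what you pay is the two-step structure (polynomials first, then density). A minor remark: your claim $\nu^n([0,t^*])\to\nu([0,t])$ is not quite what you need or can easily justify (since $t^*$ depends on $n$), but boundedness of $\nu^n([0,t^*])\le\nu^n([0,T])$ is all the argument requires, and that follows immediately from weak convergence.
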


\begin{proof}
  We can assume without loss of generality that $f$ has compact support, since
  the image of $S$ on $[0, T]$ is compact. Let $k \in
  \mathbb{Z}$ and note that whenever $S$ completes an upcrossing of $I^n_k = [k2^{-n},(k+1)2^{-n}]$ we have to add $f (k 2^{- n}) 2^{- n p}$ to the sum.
  On the other hand, if $S$ completes a downcrossing of $I^n_k$
  before $t$, then we have to add $- f ((k + 1) 2^{- n}) 2^{- n p}$ to the
  sum. Let $U_t(I^n_k)$ (resp. $D_t(I^n_k)$) denote the number of up- (resp. down-) crossings of $I^n_k$ by $S$ on $[0,t]$. Since $U_t(I^n_k)$ and $D_t(I^n_k)$ differ by at
  most 1, we get
  \begin{align*}
    &\left| \sum_{[t_j, t_{j + 1}] \in \pi_n : t_{j + 1} \leqslant t} f (S
    (t_j)) (S (t_{j + 1}) - S (t_j))^p \right|\\
    &\hspace{30pt} = \left| \sum_{k \in \mathbb{Z}} 2^{- n p} (f (k 2^{- n}) U_t (I^n_k) - f ((k + 1) 2^{- n}) D_t (I^n_k)) \right|\\
    &\hspace{30pt} \leqslant \left| \sum_{k \in \mathbb{Z}} 2^{- n p} f (k 2^{- n}) (U_t (I^n_k) - D_t (I^n_k)) \right| + \left| \sum_{k \in \mathbb{Z}} 2^{- n p} (f (k 2^{- n}) - f ((k + 1)
    2^{- n})) D_t (I^n_k) \right|\\
    &\hspace{30pt} \leqslant \sum_{k \in \mathbb{Z}} 2^{- n p} | f (k 2^{- n}) | + \sum_{k \in \mathbb{Z}} 2^{- n p} | (f (k 2^{- n}) - f ((k + 1) 2^{-
    n})) | N_t (I^n_k)\\
    &\hspace{30pt} \leqslant \sum_{k \in \mathbb{Z}} 2^{- n p} | f (k 2^{- n}) | +
    \omega_f( 2^{- n}) \sum_{k \in \mathbb{Z}} 2^{- n p} N_t (I^n_k),
  \end{align*}
  where we wrote $N_t(I^n_k) = U_t(I^n_k) + D_t(I^n_k)$ for the total number of interval crossings and where $\omega_f$ is the modulus of continuity of $f$, i.e. $\lim_{n \to \infty} \omega_f(2^{-n}) = 0$.
  By assumption,
  \[ \lim_{n \rightarrow \infty} \sum_{k \in \mathbb{Z}} 2^{- n p} N_t (I^n_k) = \nu ([0, t]) \in \mathbb{R}, \]
  and since $f (k 2^{- n}) \neq 0$ for at most $O (2^n)$ values of $k$ and
   $p > 1$ the claim follows.
\end{proof}

\end{document}